\documentclass[11pt]{amsart}

\usepackage{amsmath}
\usepackage{amsthm}
\usepackage{amssymb}
\usepackage[dvipdfmx]{graphicx}

\DeclareMathOperator{\sd}{sd}

\theoremstyle{plain}

\newtheorem{thm}{Theorem}
\newtheorem{cor}{Corollary}
\newtheorem{prop}{Proposition}
\newtheorem{lemma}{Lemma}
\newtheorem*{lemma1'}{Lemma 1'}
\newtheorem*{prop2'}{Proposition 2'}
\newtheorem*{lemma3'}{Lemma 3'}
\newtheorem*{prop4'}{Proposition 4'}
\theoremstyle{definition}

\theoremstyle{remark}
\newtheorem{remark}{Remark}
\theoremstyle{remark}

\newcommand{\vanish}[1]{}

\newcommand{\eulerian}[2]{\left\langle\!\!\begin{array}{c}#1\\#2\end{array}\!\!\right\rangle}

% a4wide setting
\oddsidemargin 0.1 in      %   Left margin on odd-numbered pages.
\evensidemargin 0.15 in    %   Left margin on even-numbered pages.
\marginparwidth 1 in       %   Width of marginal notes.
\oddsidemargin 0.125 in    %   Note that \oddsidemargin = \evensidemargin
\evensidemargin 0.125 in
\marginparwidth 0.75 in
\textwidth 6.125 in % Width of text line.

\title[The root distributions of Ehrhart polynomials of free sums]
       {The root distributions of Ehrhart polynomials of 
       free sums of reflexive polytopes}
\date{\empty}

\author{Masahiro Hachimori}
\address{Faculty of Engineering, Information and Systems University of Tsukuba,
Tsukuba, Ibaraki 305-8573, Japan}
\email{hachi@sk.tsukuba.ac.jp}
\author{Akihiro Higashitani}
\address{Department of Pure and Applied Mathematics, 
Graduate School of Information Science and Technology, Osaka University,
Suita, Osaka 565-0871, Japan}
\email{higashitani@ist.osaka-u.ac.jp}
\author{Yumi Yamada}
\address{Department of Policy and Planning Sciences,
Graduate School of Systems and Information Engineering,
Tsukuba, Ibaraki 305-8573, Japan}
\email{s1830124@s.tsukuba.ac.jp}

\keywords{reflexive polytope, Ehrhart polynomial, free sum, root polytope of type A, CL-polytope}
\subjclass[2020]{Primary: 52B20 Secondary: 26C10}

\date{October 2021}

\sloppy

\begin{document}

\maketitle
\begin{abstract}
In this paper, we study the root distributions of Ehrhart polynomials of free sums of certain reflexive polytopes. 
% We prove that all the roots of the Ehrhart polynomials of the free sums of $A_n^\vee$'s or $A_n$'s 
We investigate cases where the roots of the Ehrhart polynomials of the free sums of $A_d^\vee$'s or $A_d$'s 
lie on the canonical line $\mathrm{Re}(z)=-\frac{1}{2}$ on the complex plane $\mathbb{C}$, 
where $A_d$ denotes the root polytope of type A of dimension $d$ and $A_d^\vee$ denotes its polar dual. 
For example, it is proved that $A_m^\vee \oplus A_n^\vee$ with $\min\{m,n\} \leq 1$ or $m+n \leq 7$, 
$A_2^\vee \oplus (A_1^\vee)^{\oplus n}$ and $A_3^\vee \oplus (A_1^\vee)^{\oplus n}$ for any $n$ satisfy this property. 
We also perform computational experiments for other types of free sums of $A_n^\vee$'s or $A_n$'s. 
\end{abstract}

\section{Introduction}

A polytope $Q\subseteq \mathbb{R}^d$ is called \textit{integral} if all the vertices are on $\mathbb{Z}^d$. 
For an integral polytope $Q \subset \mathbb{R}^d$ of dimension $d$ and a positive integer $k$, 
$E_Q(k)=\#(kQ\cap\mathbb{Z}^d)$ is known to be a polynomial of degree $d$, where $kQ = \{ kx : x\in Q\}$. 
This polynomial is called the \textit{Ehrhart polynomial} of $Q$. 
Its generating function $\mathrm{Ehr}_Q(t)$, called the \textit{Ehrhart series}, can be written as
$$
   \mathrm{Ehr}_Q(t) = \sum_{k=0}^{\infty} E_Q(k)t^k = \frac{\delta_0+\delta_1 t + \dots +\delta_d t^d}{(1-t)^{d+1}},
$$
where the numerator is the \textit{$\delta$-polynomial} of $Q$, denoted by $\delta_Q(t)$, and 
the sequence of the coefficients $\delta(Q)=(\delta_0, \delta_1, \dots, \delta_d)$ is the \textit{$\delta$-vector} of $Q$. 
(They are also known as $h^*$-polynomial and $h^*$-vector, respectively.) 
The $\delta$-vector fully encodes the Ehrhart polynomial and $E_Q(k)$ can be recovered from $\delta(Q)$ as follows:
$$
    E_Q(k) = \sum_{j=0}^d \delta_j \binom{d+k-j}{d} =: f^\mathrm{Ehr}(\delta(Q)) .
$$
We refer the reader to \cite{BR:2007} for the introduction to the Ehrhart polynomials and $\delta$-polynomials of integral polytopes. 

For a polytope $Q \subset \mathbb{R}^d$, the \textit{polar dual} of $Q$ is defined by 
$$
	Q^\vee= \{ x \in \mathbb{R}^d : \langle x,y \rangle \geq -1 \text{ for any }y \in Q\}, 
$$
where $\langle \cdot, \cdot \rangle$ denotes the usual inner product of $\mathbb{R}^d$. 
Note that $(Q^\vee)^\vee=Q$ holds for polytopes containing the origin. 
An integral polytope containing the origin in its interior is \textit{reflexive} 
if its polar dual is also an integral polytope (\cite{B:1994, H:1992}). Note that if $Q$ is reflexive, then so is $Q^\vee$. 
It is known that an integral polytope is reflexive if and only if its $\delta$-vector is palindromic, 
and correspondingly, the roots of the Ehrhart polynomial distribute symmetrically 
with respect to the line ${\rm Re}(z)=-\frac{1}{2}$ on the complex plane $\mathbb{C}$. (See, e.g., \cite[Proposition 2.1]{HKM:2017}.) 
Naturally, it is of interest when the roots of the Ehrhart polynomials all lie on the line ${\rm Re}(z)=-\frac{1}{2}$. Such reflexive polytopes are called ``CL-polytopes'' (\cite{HHK:2019}) 
and studied in several papers (e.g. \cite{HHK:2019, HKM:2017, HY:2021, MHNOH}). 
(In what follows, we call a reflexive polytope \textit{CL} if it is a CL-polytope.)

For two integral polytopes $Q_1\subset \mathbb{R}^{\dim Q_1}$ and 
$Q_2 \subset \mathbb{R}^{\dim Q_2}$, both containing the origins, the free sum $Q_1\oplus Q_2$ is defined by 
$$
Q_1\oplus Q_2 = {\rm conv} ((Q_1\times 0_{Q_2})\cup (0_{Q_1} \times Q_2)) \subset \mathbb{R}^{\dim Q_1}\times \mathbb{R}^{\dim Q_2}, 
$$
where $0_{Q_1}$ and $0_{Q_2}$ are the origins of $\mathbb{R}^{\dim Q_1}$ and $\mathbb{R}^{\dim Q_2}$, respectively. 
The operation of the free sum has importance since it is the polar dual
of the Cartesian product in such a way that $$ (Q_1\times Q_2)^\vee = Q_1^\vee \oplus Q_2^\vee.$$
%where $Q^\vee$ is the polar dual of an integral polytope $Q$.
Note that $Q_1\oplus Q_2$ is reflexive if and only if both $Q_1$ and $Q_2$ are reflexive. 

The $\delta$-polynomial of the free sum has the following simple formula~\cite{B:2006}:
\begin{align}\label{eq:product} \delta_{Q_1\oplus Q_2}(t) = \delta_{Q_1}(t)\delta_{Q_2}(t). \end{align}
On the other hand, the Ehrhart polynomial of $Q_1\oplus Q_2$ can be given (see \cite{B:2006})
but not so simple,
% $$ E_{Q_1\oplus Q_2}(k) = E_{Q_1}(k) + \sum_{j=1}^k E_{Q_2}(k-j)( E_{Q_1}(j)-E_{Q_1}(j-1) ) $$
and the root distribution of the Ehrhart polynomial of $Q_1\oplus Q_2$ is not clear.
Especially, as we will see later,
$Q_1\oplus Q_2$ is not always CL even if both $Q_1$ and $Q_2$ are CL.
In this paper, we are interested in when $Q_1\oplus Q_2$ becomes CL for CL polytopes $Q_1$ and $Q_2$.

A typical example of CL-polytopes is the following special case. 
For a reflexive polytope $Q$, when all the roots $z$ of the $\delta$-polynomial of $Q$ satisfy $|z|=1$, 
it follows from \cite{RV:2002} that all the roots of the Ehrhart polynomial of $Q$ are on the line ${\rm Re}(z)=-\frac{1}{2}$, i.e., $Q$ is CL. 
For example, the following polytopes are such examples: 
\begin{itemize}
\item A cross polytope $\mathrm{Cr}_d={\rm conv}(\{e_1,\dots,e_d,-e_1,\dots,-e_d \})$, where
      $\delta_{\mathrm{Cr}_d}(t)=(1+t)^d$.
\item A simplex $T_d = {\rm conv}(\{e_1,\dots,e_d,-(e_1+\dots+e_d) \})$, where
      $\delta_{T_d}(t)=1+t+\dots+t^d$.
\end{itemize}
Here, $e_i$ denotes the $i$-th unit vector of $\mathbb{R}^d$. 
If $Q_i$'s are such polytopes, then we have $Q_1\oplus \dots\oplus Q_n$ is CL 
since the roots of the $\delta$-polynomial of $Q_1\oplus \dots\oplus Q_n$ also satisfy $|z|=1$ by \eqref{eq:product}. 
(Notice that $\mathrm{Cr}_d$ is unimodularly equivalent to $\underbrace{T_1 \oplus \cdots \oplus T_1}_d$.) 

In this paper, we mainly discuss the case $Q_i$'s are the dual of the classical root polytopes of type A.
Here, the classical root polytope of type A is defined as
$$ A_d = {\rm conv}(\{ \pm(e_i + \dots + e_j) : 1\le i\le j\le d\}), $$
and we consider its dual $A^\vee_d$. 
The Ehrhart polynomial of $A^\vee_d$ is known to be $$ E_{A^\vee_d}(k)= (k+1)^{d+1} - k^{d+1} $$ 
in \cite[Lemma 5.3]{HKM:2017}. 
Reflexive polytopes $A_d$ and $A_d^\vee$ are shown to be CL in \cite{HKM:2017}, 
but we see the roots $z$ of their $\delta$-polynomials do not satisfy $|z|=1$. 
The reason we consider this free sum of $A^\vee_d$'s is that 
it appears as the equatorial spheres of the complete graded posets.
This will be discussed in Section~\ref{sec:posets}.
% After that, we investigate the CL-ness of $A^\vee_{p_1,p_2,\dots,p_k}$ 
After that, we investigate the CL-ness of $A^\vee_{p_1}\oplus A^\vee_{p_2}\oplus\dots\oplus A^\vee_{p_k}$ 
in the following sections.

We collect the results which show the CL-ness for the free sums of $A_d^\vee$'s or $A_d$'s in what follows: 
\begin{itemize}
\item $A_m^\vee \oplus A_n^\vee$ with $\min\{m,n\} \leq 1$ or $m+n \leq 7$ (Theorem~\ref{thm:rank2cp}); 
\item $A_m^\vee \oplus (A_1^\vee)^{\oplus n}$ for any $n \geq 1$ with $m=1,2,3$ (Proposition~\ref{thm:1111}, Theorems~\ref{thm:2111} and \ref{thm:3111}); 
\item $A_1 \oplus A_n$ for any $n \geq 1$ (Theorem~\ref{thm:A1n}); 
\item $A_m \oplus A_1^{\oplus n}$ for any $n \geq 1$ with $m=1,2,3$ (Proposition~\ref{thm:A111-211} and Theorem~\ref{thm:A3111}). 
\end{itemize}
We also perform other types of free sums of $A_n^\vee$'s or $A_n$'s and describe the computational results.  

\bigskip
\section*{Acknowledgments}
The second named author is partially supported by JSPS Grant-in-Aid for Scientists Research (C) 20K03513.

\bigskip
%%%%%%%%%%%%%%%%%%%%%%%%%%%%%%%%%%%%%%%%%%%%%%%%%%%%%%%%%%%%%%%%%%%%%%%%%%%%%%%%%%%%%%%%%%%%%%%%%%%%%%%%%%%%%%%%%%%%%%%%%%%%%%%%%%%%%%%%
%%%%%%%%%%%%%%%%%%%%%%%%%%%%%%%%%%%%%%%%%%%%%%%%%%%%%%%%%%%%%%%%%%%%%%%%%%%%%%%%%%%%%%%%%%%%%%%%%%%%%%%%%%%%%%%%%%%%%%%%%%%%%%%%%%%%%%%%
%%%%%%%%%%%%%%%%%%%%%%%%%%%%%%%%%%%%%%%%%%%%%%%%%%%%%%%%%%%%%%%%%%%%%%%%%%%%%%%%%%%%%%%%%%%%%%%%%%%%%%%%%%%%%%%%%%%%%%%%%%%%%%%%%%%%%%%%
%%%%%%%%%%%%%%%%%%%%%%%%%%%%%%%%%%%%%%%%%%%%%%%%%%%%%%%%%%%%%%%%%%%%%%%%%%%%%%%%%%%%%%%%%%%%%%%%%%%%%%%%%%%%%%%%%%%%%%%%%%%%%%%%%%%%%%%%
%%%%%%%%%%%%%%%%%%%%%%%%%%%%%%%%%%%%%%%%%%%%%%%%%%%%%%%%%%%%%%%%%%%%%%%%%%%%%%%%%%%%%%%%%%%%%%%%%%%%%%%%%%%%%%%%%%%%%%%%%%%%%%%%%%%%%%%%

\section{Ehrhart polynomials of equatorial spheres of graded posets}\label{sec:posets}

Let $(P, \preceq)$ be a finite partially ordered set, or a poset, with $|P|=d$. 
The \textit{order polytope} $O_P$ of $P$ is given by 
$$
     O_P = \{ x\in \mathbb{R}^d : x_a\le x_b \; \text{ for }\; b \prec a \;(a,b\in P)\; \},
$$
where the coordinates of $\mathbb{R}^d$ are indexed by the elements of $P$. 
This is an integral polytope whose vertices correspond to the order ideals of $P$ (\cite{S:1986}).

As another polytope arising from posets closely related to the order polytope, the \textit{chain polytope} of $P$ is defined by 
\begin{align*}
C_P = \{  x\in \mathbb{R}^d :\;  x_a \ge 0  \; (a\in P), \;\; 
x_{a_1}+\dots+x_{a_k}\le 1  \;\text{ for }\; a_1 \prec \cdots \prec a_k \; (a_i \in P)\; \}. 
\end{align*}
This is an integral polytope whose vertices correspond to the antichains of $P$,
and it is shown in \cite{S:1986} that the Ehrhart polynomials of $O_P$ and $C_P$ coincide: $E_{O_P}(k) = E_{C_P}(k)$, 
so we also have $\delta_{O_P}(t)=\delta_{C_P}(t)$. 

\medskip
For the poset $P$ on $[n] = \{1,2,\dots,n\}$, the \textit{$P$-Eulerian polynomial} is 
$$
    W(P) = \sum_{\pi\in {\mathcal L}(P)} x^{{\rm des}(\pi)+1},
$$
where ${\mathcal L}(P)$ is the set of all linear extensions of $P$ and ${\rm des}(\pi)$ is the size of the descent set of $w$ with respect to $P$. 
That is, ${\mathcal L}(P)$ is the set of permutations $w=(w_1,w_2,\dots,w_n)$ of $[n]$ such that $w_i\prec w_j$ implies $i<j$, and ${\rm des}(w)=\# \{i\in [n-1] : w_i > w_{i+1} \}$. 
This polynomial is equal to the $\delta$-polynomial of $O_P$, i.e., $W(P)=\delta_{O_P}(t)$. 
%This polynomial is closely related to the Ehrhart polynomial in such a way that $$    W(P) = \delta_0+\delta_1 t + \dots +\delta_d t^d = \delta_{O_P}(t). $$

When the poset $P$ is graded of rank $r$, the result of \cite{RW:2005} shows that the $\delta$-vector can be written as
$$ 
   \delta(O_P) = h(\Delta_\text{eq}(P)*\sigma^r), 
$$
where $\sigma^r$ is the $r$-dimensional standard simplex and $\Delta_\text{eq}(P)$ is the equatorial sphere of $P$, which will be explained below. Here, 
% $\Delta_\text{eq}(P)$ and $\sigma^r$ are simplicial complexes,
the operator $*$ is the simplicial join of simplicial complexes 
and $h(\Delta_\text{eq}(P)*\sigma^r)$ represents the $h$-vector of the simplicial complex $\Delta_\text{eq}(P)*\sigma^r$.

For a poset $P$, a \textit{$P$-partition} is a function $f: P\rightarrow \mathbb{R}$
such that $f(a)\ge 0$ for all $a\in P$ and $f(a)\geq f(b)$ for all $a\prec b$.
When $P$ is a graded poset of rank $r$, let $P^{(i)}$ denote the set of the elements of $P$ of rank $i$.
We say that a $P$-partition is \textit{equatorial} if $\min_{a\in P} f(a)=0$ and 
for every $2 \leq j \leq r$ there exists $a_{j-1}\prec a_j$ with $a_{j-1}\in P^{(j-1)}$, $a_j\in P^{(j)}$ and $f(a_{j-1})=f(a_j)$.
An order ideal $I$ of $P$ is \textit{equatorial} if its characteristic vector $\chi_I$ is equatorial.
A chain of order ideals $I_1\subset I_2\subset \dots\subset I_t$ is \textit{equatorial}
if $\chi_{I_i}+\dots+\chi_{I_t}$ is equatorial.
The \textit{equatorial complex} $\Delta_\text{eq}(P)$ of $P$ is the simplicial complex
whose  vertex set is the equatorial ideals of $P$ and faces are 
equatorial chains of order ideals of $P$.
The result of \cite{RW:2005} shows that $\Delta_\text{eq}(P)$ is a (polytopal) simplicial sphere
and it is called the \textit{equatorial sphere} of $P$.
Since the $h$-vector of a simplicial sphere is palindromic by the Dehn-Sommerville equations, 
this implies that the $\delta$-vector of $O_P$ for a graded poset $P$ is palindromic followed by $r$ 0's as follows: 
$$
   \delta(O_P) = (h_0, h_1, \dots, h_1, h_0, \underbrace{0, 0, \dots, 0}_r). 
$$
The palindromic part $(h_0,h_1,\dots,h_1,h_0)=h(\Delta_\text{eq})$ of $\delta(O_P)$ corresponds to 
the equatorial sphere $\Delta_\text{eq}(P)$, so it will make sense to consider the 
corresponding polynomial as follows.
$$
  E^\text{eq}_{P}(k) = f^\mathrm{Ehr}( h(\Delta_\text{eq}) ) = f^\mathrm{Ehr}((h_0, h_1, \dots, h_1, h_0)).
$$
We call this $E^\text{eq}_{P}(k)$ the \textit{equatorial Ehrhart polynomial} of the graded poset $P$.
In \cite{RW:2005}, the equatorial sphere is constructed as a quotient polytope 
from the order polytope, that is, as a quotient polytope $O^\text{eq}_P=O_P/V^\text{rc}$, where 
$V^\text{rc}$ is the rank-constant subspace, the subspace consisting of partition functions
that are rank-constant (i.e., $f(x)=f(y)$ whenever $x$ and $y$ are of the same rank in $P$).
The polynomial $E^\text{eq}_{P}(k)$ corresponds to the Ehrhart polynomial of this polytope. 

Since $h(\Delta_\text{eq})$ is palindromic, the roots of $E^\text{eq}_{P}(k)$ distribute symmetrically with respect to the line ${\rm Re}(z)=-\frac{1}{2}$.
It is of our interest for which graded poset $P$ all the roots of $E^\text{eq}_{P}(k)$ lie on the line ${\rm Re}(z)=-\frac{1}{2}$.
We call such $E^\text{eq}_{P}(k)$ to be CL analogously to the CL-polytopes among reflexive polytopes.

\medskip
A \textit{complete graded poset} $P_{n_1,n_2,\dots,n_r}$ stands for a graded poset of rank $r$ such that 
the set $P^{(i)}$ of the elements of rank $i$ consists of $n_i$ elements for every $i$ and $a_i\prec a_j$ holds for every $a_i\in P^{(i)}$ and $a_j\in P^{(j)}$ with $i<j$. 
For complete graded posets, we can easily calculate the $\delta$-polynomials as follows.
Since the antichains of $P_{n_1,n_2,\dots,n_r}$ are a subset $X\subset P^{(i)}$ for some $i$, we have 
$$C_{P_{n_1,n_2,\dots,n_r}} = [0,1]^{n_1}\oplus [0,1]^{n_2}\oplus\dots\oplus [0,1]^{n_r}. $$
The $\delta$-polynomial of $[0,1]^{n}$ is given by the Eulerian polynomial 
$S_{n}(t) = \sum_{j=0}^{n-1}\eulerian{n}{j}t^j$,
where $\eulerian{n}{j}$ is the Eulerian number,
and hence we have 
$$ \delta_{O_{P_{n_1,n_2,\dots,n_r}}}(t) = \delta_{C_{P_{n_1,n_2,\dots,n_r}}}(t) = \prod_{i=1}^r S_{n_i}(t). $$

There is another explanation for this.
For the complete graded poset $P_{n_1,n_2,\dots,n_r}$, 
an equatorial ideal is a proper subset of $P^{(i)}$ for some $0\le i\le r$ together with 
all $P^{(j)}$'s with $j<i$, 
hence we observe that 
$\Delta_\text{eq}(P_{n_1,n_2,\dots,n_r})$ is isomorphic 
to the order complex of $\check{B}_{n_1}\biguplus \dots \biguplus \check{B}_{n_r}$, 
where $\check{B}_n$ is the poset removing the top element from the boolean lattice 
of order $n$ ($=$ the ordered set consisting of all the strict subsets of $\{1,\dots,n\}$
ordered by inclusion), and $\biguplus$ is the operator of the ordinal sum of the posets
(i.e., $P\biguplus P'$ is the poset over $P\cup P'$ with an order relation $\preceq_{P\biguplus P'}$ 
such that $u\preceq_{P\biguplus P'} v$ if $u,v\in P$ and $u\preceq_{P} v$, $u,v\in P'$ and $u\preceq_{P'} v$, or $u\in P$ and $v\in P'$). 
This shows that the equatorial sphere of $P_{n_1,n_2,\dots,n_r}$ is isomorphic to $\sd(\Delta_{n_1}) * \dots * \sd(\Delta_{n_r})$.  
%where the operator $*$ is the simplicial join.
Since the $h$-polynomial of $\sd(\Delta_{n})$ is given by the Eulerian polynomial (see, e.g., \cite[Sec.~9.2]{P:2015}), we have the same conclusion. 

The equatorial Ehrhart polynomial for $P_n$, which is just an antichain with $n$ elements, can be calculated as follows. Since we have 
$\delta_i = \eulerian{n}{i}$ for $ 0\le i\le n-1$, where $\delta(O_{P_{n}})=(\delta_0,\delta_1,\ldots,\delta_n)$, we obtain that 
\begin{align*}
  E^\mathrm{eq}_{P_{n}}(k) &= \sum_{j=0}^{n-1} \eulerian{n}{j}\binom{n-1+k-j}{n-1} 
                     = \sum_{j=0}^{n-1} \eulerian{n}{n-1-j}\binom{k+(n-1-j)}{n-1} \\
                    &= \sum_{j'=0}^{n-1} \eulerian{n}{j'}\binom{k+j'}{n-1}  \quad (j'=n-1-j) \\
         &= \sum_{j'=0}^{n-1} \eulerian{n}{j'} \left( \binom{k+j'+1}{n} -  \binom{k+j'}{n} \right) 
         = (k+1)^{n} - k^{n} .
\end{align*}
Here, the last equality is derived from Worpitzky's identity (e.g. \cite[Sec.~6.2]{GKP:1994}):
$\displaystyle x^n = \sum^{n-1}_{j=0} \eulerian{n}{j}\binom{x+j}{n} .$
This polynomial $(k+1)^{n} - k^{n}$ equals the Ehrhart polynomial of $A^\vee_{n-1}$ as shown in \cite{HKM:2017}. 
%where $A^\vee_{n-1}$ is the dual of the classical root polytope $A_{n-1}$.
That is, we have $E^\text{eq}_{P_n}(k) = E_{A^\vee_{n-1}}(k)$. 
In fact, more strongly, we observe that the equatorial polytope $O^\text{eq}_{P_n}$ is unimodularly equivalent to $A^\vee_{n-1}$ as follows. 
%and the fact $E^\text{eq}_{P_n}(k) = E_{A^\vee_{n-1}}(k)$ follows from that. 
\begin{prop}
$O^\text{eq}_{P_n} = O(P_n)/V^\text{rc}$ is unimodularly equivalent to $A^\vee_{n-1}$.
\end{prop}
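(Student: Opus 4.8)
The plan is to realize both polytopes explicitly inside $\mathbb{R}^{n-1}$ by a pair of unimodular coordinate changes and to observe that they have identical facet descriptions. First I would record that, since $P_n$ is an antichain, its order polytope is the cube $O_{P_n}=[0,1]^n$, and since every element has the same rank the rank-constant subspace is the diagonal line $V^\mathrm{rc}=\mathbb{R}\mathbf{1}$, where $\mathbf{1}=(1,\dots,1)$. Thus $O^\text{eq}_{P_n}$ is the image of the cube under the quotient map $\mathbb{R}^n\to\mathbb{R}^n/\mathbb{R}\mathbf{1}$, equipped with the induced lattice $\mathbb{Z}^n/\mathbb{Z}\mathbf{1}$.

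Next I would fix lattice coordinates on the quotient via $u_i=x_i-x_n$ for $1\le i\le n-1$. This sends $\mathbb{Z}^n/\mathbb{Z}\mathbf{1}$ isomorphically onto $\mathbb{Z}^{n-1}$ (surjectivity is witnessed by $(u,0)$, and injectivity holds because $u=0$ forces $x\in\mathbb{R}\mathbf{1}$), so it is legitimate to compute the quotient polytope in these coordinates. Eliminating the variable $t=x_n$ from the constraints $0\le x_i=u_i+t\le 1$ and $0\le t\le 1$ by Fourier--Motzkin (with the convention $u_n:=0$) shows that a point lies in the image precisely when $\max_i u_i-\min_j u_j\le 1$; that is,
\[
  O^\text{eq}_{P_n}=\{\, u\in\mathbb{R}^{n-1} : -1\le u_i\le 1 \text{ and } u_i-u_j\le 1 \text{ for } i\ne j \,\}.
\]

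On the other side, since the vertices of $A_{n-1}$ occur in antipodal pairs, its polar dual is cut out by the inequalities $|\langle x,e_i+\dots+e_j\rangle|\le 1$, i.e.
\[
  A^\vee_{n-1}=\{\, x\in\mathbb{R}^{n-1} : |x_i+x_{i+1}+\dots+x_j|\le 1 \text{ for all } 1\le i\le j\le n-1 \,\}.
\]
Applying the unimodular change of variables $v_i=x_1+\dots+x_i$ (lower triangular with unit diagonal, hence in $\mathrm{GL}_{n-1}(\mathbb{Z})$) turns each consecutive sum $x_i+\dots+x_j$ into $v_j-v_{i-1}$ with the convention $v_0=0$, so the defining inequalities become exactly $|v_i|\le 1$ and $|v_i-v_j|\le 1$ for $i\ne j$. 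This is verbatim the facet description of $O^\text{eq}_{P_n}$ obtained above, once one notes that requiring $u_i-u_j\le 1$ for all ordered pairs is the same as $|u_i-u_j|\le 1$. Hence the two polytopes coincide after a unimodular transformation.

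The bookkeeping is routine; the step needing the most care is the projection in the second paragraph, namely verifying that the elimination of $x_n$ produces no inequalities beyond $-1\le u_i\le 1$ and $u_i-u_j\le 1$, and that the coordinate map $u_i=x_i-x_n$ genuinely identifies the quotient lattice $\mathbb{Z}^n/\mathbb{Z}\mathbf{1}$ with $\mathbb{Z}^{n-1}$, so that the asserted unimodular equivalence is being claimed for the correct lattices. Once the two facet descriptions are seen to agree, the equivalence is immediate.
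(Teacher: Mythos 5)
Your proof is correct, but it runs along a genuinely different track from the paper's. The paper argues on the vertex side: using the same identification $u_i=x_i-x_n$ of the quotient lattice $\mathbb{Z}^n/\mathbb{Z}\mathbf{1}$ with $\mathbb{Z}^{n-1}$ (their projection $\pi$ is exactly your coordinate map), it computes the image of the cube's vertices to get the vertex set $\{\pm\sum_{i\in S}e_i : S\subseteq [n-1]\}$ of $O^\text{eq}_{P_n}$, then identifies the polar dual $(O^\text{eq}_{P_n})^\vee$ with the unimodular copy ${\rm conv}(\{\pm e_i\}\cup\{e_i-e_j\})$ of the root polytope by a two-way inclusion --- the easy direction by evaluating inner products on vertices, the harder direction by a case analysis showing that any integer point $w$ of the dual has no entry of absolute value $\geq 2$ and no mixed signs, hence is of the form $\pm\sum_{i\in S}e_i$ --- and finally dualizes back. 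You work entirely on the facet side: Fourier--Motzkin elimination of $t=x_n$ gives the halfspace description $\{u : |u_i|\le 1,\ |u_i-u_j|\le 1\}$ of the projected cube (your elimination is right: the lower bounds $t\ge -u_i$ against the upper bounds $t\le 1-u_j$, with $u_n=0$, produce exactly $u_j-u_i\le 1$ for all $i,j$ and nothing more), and the partial-sum substitution $v_i=x_1+\cdots+x_i$ --- the same telescoping unimodular map that underlies the paper's alternative presentation of the root polytope --- turns the facet description $|x_i+\cdots+x_j|\le 1$ of $A^\vee_{n-1}$ into the identical system. Your route is in effect the polar dual of the paper's, and it buys directness: both inclusions come for free from matching inequality systems, so you avoid the lattice-point case analysis and the final dualization step (which in the paper tacitly uses $(Q^\vee)^\vee=Q$ and reflexivity). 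What the paper's route buys is an explicit vertex description of $O^\text{eq}_{P_n}$ and a direct identification of its polar dual with the root polytope $A_{n-1}$ itself, which is the form of the statement used in the surrounding discussion. The two points you flag as delicate --- that no extra inequalities arise in the elimination, and that the quotient lattice is correctly identified with $\mathbb{Z}^{n-1}$ --- are indeed the load-bearing steps, and both check out.
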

\begin{proof}
The subspace $V^\text{rc}$ is the space of rank-constant partitions, and in this case, it is a one-dimensional space $V^\text{rc} = {\rm span} \{ \sum_{i\in [n]} e_i \}$. 
Let $\pi$ be the projection map from $O(P_n)$ to $O^\text{eq}_{P_n}$. 
By letting $f=\sum_{i\in [n]} e_i$, for any $v \in \mathbb{R}^n$, we can uniquely write $v=\sum_{i=1}^{n-1} r_i e_i + sf \in V \, (r_i, s\in \mathbb{R})$, then we have $\pi(v) = \sum_{i=1}^{n-1} r_i e_i$. 
The vertex set of $O_{P_n}$ is $\{\sum_{i\in S}e_i : S\subseteq [n] \}$, and they are mapped to the following:
$$
\pi\left( \sum_{i\in S}e_i \right) = 
\begin{cases}
\sum_{i\in S}e_i & \text{if $n \not\in S$}, \\
-\sum_{i\not\in S}e_i & \text{if $n \in S$}.
\end{cases}
$$
From this, we observe that the vertex set of $O^\text{eq}_{P_n}$ is
$\{ \pm \sum_{i\in S} e_i : S\subseteq [n-1] \}$.
Hence $$(O^\text{eq}_{P_n})^\vee = \left\{x \in \mathbb{R}^{n-1}: \left\langle \pm \sum_{i\in S} e_i, x \right\rangle \le 1, \; S \subset [n-1] \right\}.$$

On the one hand, it is easy to see that $A_n$ is unimodularly equivalent to 
\begin{align*}
{\rm conv}(\{\pm e_i : 1\le i\le n-1\} \cup \{e_i-e_j : 1\le i\neq j\le n\}). 
\end{align*}
Since we have 
$$
\left\langle \sum_{i\in S} e_i, \pm e_j \right\rangle = 
\begin{cases}
\pm 1 &\text{if }j\in S, \\
0 &\text{if }j\not\in S, 
\end{cases}
\quad
\text{and} 
\quad
\left\langle \sum_{i\in S} e_i, e_j-e_k \right\rangle = 
\begin{cases}
1 &\text{if }j\in S, k\not\in S, \\
-1 &\text{if }j\not\in S, k\in S, \\
0 &\text{if }j,k \in S \text{ or }j,k \not\in S,  
\end{cases}
$$
we see that $A_n \subseteq (O^\text{eq}_{O_P})^\vee.$
On the other hand, let $w=(w_1,\ldots,w_{n-1}) \in \mathbb{Z}^{n-1}$ satisfying that $\langle w,v \rangle \leq 1$ for any $v \in A_n$. 
If there is $i$ with $|w_i| \geq 2$, then $|\langle w, e_i \rangle| \geq 2$, a contradiction. Thus, $w \in \{0,\pm 1\}^{n-1}$. 
Moreover, if there are $i$ and $i'$ with $w_i =1$ and $w_{i'}=-1$, then $\langle w, e_i-e_{i'}\rangle =2$, a contradiction. 
Hence, $w \in \{0,1\}^{n-1}$ or $w \in \{0,-1\}^{n-1}$. This means that $w$ is always of the form $w=\pm\sum_{i \in S}e_i$. 
This implies that $(O^\text{eq}_{O_P})^\vee \subset A_n$, as required. 
\end{proof}

\begin{cor}
% $$ E^\text{eq}_{P_{n_1,n_2,\dots,n_r}}(k)=E_{A^\vee_{(n_1-1),(n_2-1),\dots,(n_r-1)}}(k).$$
We have $$ E^\text{eq}_{P_{n_1,n_2,\dots,n_r}}(k)=E_{A^\vee_{n_1-1}\oplus A^\vee_{n_2-1}\oplus\dots\oplus A^\vee_{n_r-1}}(k).$$
\end{cor}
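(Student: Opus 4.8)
The plan is to show that both Ehrhart polynomials in the asserted identity are obtained by applying the operator $f^\mathrm{Ehr}$ to one and the same polynomial, namely $\prod_{i=1}^r S_{n_i}(t)$. Since $f^\mathrm{Ehr}$ depends only on its input coefficient sequence and on the associated degree, producing the same output whenever fed the same polynomial, establishing this common description settles the corollary. The two sides will then be handled completely symmetrically, each via an application of the free-sum formula \eqref{eq:product}.

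For the right-hand side I would iterate \eqref{eq:product} to get
$$\delta_{A^\vee_{n_1-1}\oplus\dots\oplus A^\vee_{n_r-1}}(t)=\prod_{i=1}^r\delta_{A^\vee_{n_i-1}}(t),$$
and then record that $\delta_{A^\vee_{n_i-1}}(t)=S_{n_i}(t)$ for each $i$. The latter is read off from the material preceding the Proposition: the computation there shows that $O^\text{eq}_{P_{n_i}}$ has $\delta$-vector $(\eulerian{n_i}{0},\dots,\eulerian{n_i}{n_i-1})$, i.e. the coefficient sequence of $S_{n_i}(t)$, while the Proposition identifies $O^\text{eq}_{P_{n_i}}$ with $A^\vee_{n_i-1}$ up to unimodular equivalence, so the two share a $\delta$-polynomial. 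Hence the right-hand side equals $f^\mathrm{Ehr}\bigl(\prod_{i=1}^r S_{n_i}(t)\bigr)$.

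For the left-hand side I would invoke the chain-polytope computation already carried out in this section: $\delta_{O_{P_{n_1,\dots,n_r}}}(t)=\delta_{C_{P_{n_1,\dots,n_r}}}(t)=\prod_{i=1}^r S_{n_i}(t)$, where the product again arises from \eqref{eq:product} applied to $C_{P_{n_1,\dots,n_r}}=[0,1]^{n_1}\oplus\dots\oplus[0,1]^{n_r}$. It remains to recognize this product as the palindromic part $h(\Delta_\text{eq})$ governing $E^\text{eq}_{P_{n_1,\dots,n_r}}$. This is where I would be most careful: by the graded-poset structure the vector $\delta(O_{P_{n_1,\dots,n_r}})$ consists of the palindromic block $h(\Delta_\text{eq})$ followed by exactly $r$ zeros, while $\prod_{i=1}^r S_{n_i}(t)$ is palindromic of degree $\sum_i(n_i-1)=|P|-r$; matching degrees forces $h(\Delta_\text{eq})=\prod_{i=1}^r S_{n_i}(t)$, and therefore $E^\text{eq}_{P_{n_1,\dots,n_r}}(k)=f^\mathrm{Ehr}\bigl(\prod_{i=1}^r S_{n_i}(t)\bigr)$ as well.

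With both sides expressed as $f^\mathrm{Ehr}\bigl(\prod_{i=1}^r S_{n_i}(t)\bigr)$ the identity follows at once. The one piece of bookkeeping worth double-checking is the degree alignment inside $f^\mathrm{Ehr}$: the common polynomial has degree $\sum_i(n_i-1)$, which equals $\dim\bigl(A^\vee_{n_1-1}\oplus\dots\oplus A^\vee_{n_r-1}\bigr)$, so $f^\mathrm{Ehr}$ is applied with the same value of $d$ on both sides and the outputs are literally equal as polynomials in $k$. There is no substantive obstacle beyond this accounting, the corollary being in essence a direct consequence of the multiplicativity of $\delta$-polynomials under free sums.
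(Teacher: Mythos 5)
Your proposal is correct and is essentially the paper's own (implicit) argument: the corollary is stated without proof precisely because the surrounding discussion already identifies both $h(\Delta_\text{eq}(P_{n_1,\dots,n_r}))$ and $\delta_{A^\vee_{n_1-1}\oplus\dots\oplus A^\vee_{n_r-1}}(t)$ with $\prod_{i=1}^r S_{n_i}(t)$ --- the former via $\delta(O_P)=h(\Delta_\text{eq}(P)*\sigma^r)$ together with the chain-polytope decomposition and \eqref{eq:product}, the latter via \eqref{eq:product} and the identification $\delta_{A^\vee_{n-1}}(t)=S_n(t)$ coming from the Proposition --- so both sides are $f^\mathrm{Ehr}$ of the same vector in the same dimension $\sum_i(n_i-1)$. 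Your explicit degree bookkeeping (extracting the palindromic block of length $|P|-r+1$ and matching it against the dimension of the free sum) is exactly the intended reading, just spelled out more carefully than the paper does.
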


By this, the CL-ness of $E^\text{eq}_{P_{n_1,n_2,\dots,n_r}}(k)$ is
% equivalent to the CL-ness of $A^\vee_{n_1,n_2,\dots,n_r}$.
equivalent to the CL-ness of $A^\vee_{n_1-1}\oplus A^\vee_{n_2-1}\oplus\dots\oplus A^\vee_{n_r-1}$.

\begin{remark}
The discussion of this section gives that the $\delta$-polynomial of $A^\vee_d$ equals to
$$ \delta_{A^\vee_d}(t) = \sum_{j=0}^d \eulerian{d+1}{j} t^j .$$
\end{remark}

\section{CL-ness of $A^\vee_m \oplus A^\vee_n$} \label{sec:mn}

% For the case of two parameters $A^\vee_n\oplus A^\vee_m$, we have the following.
For the case of the free sum $A^\vee_1\oplus A^\vee_n$, we have the following.

\begin{prop} \label{prop:P2n}
We have
% $$ E_{A^\vee_{1,n}}(k) = (k+1)^n + k^n,$$
% and $A^\vee_{1,n}$ is a CL-polytope.
$$ E_{A^\vee_1\oplus A^\vee_n}(k) = (k+1)^n + k^n,$$
and $A^\vee_1\oplus A^\vee_n$ is a CL-polytope.
\end{prop}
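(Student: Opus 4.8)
The plan is two-fold: first pin down the Ehrhart polynomial in closed form, and then read off the location of its roots directly from that form. For the first step I would exploit the product formula \eqref{eq:product} for $\delta$-polynomials of free sums; for the second, the decisive observation is that \emph{any} polynomial of the shape $(k+1)^m+k^m$ has all of its complex roots on the canonical line $\operatorname{Re}(z)=-\tfrac12$, so once the closed form is in hand the CL conclusion is essentially automatic.

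Concretely, since $A_1^\vee=[-1,1]$ has $\delta_{A_1^\vee}(t)=1+t$, the free-sum formula \eqref{eq:product} gives $\delta_{A_1^\vee\oplus A_n^\vee}(t)=(1+t)\,\delta_{A_n^\vee}(t)$, and dividing by the appropriate power of $(1-t)$ factors the Ehrhart series as $\mathrm{Ehr}_{A_1^\vee\oplus A_n^\vee}(t)=\tfrac{1+t}{1-t}\,\mathrm{Ehr}_{A_n^\vee}(t)$. Writing $\tfrac{1+t}{1-t}=2\sum_{m\ge0}t^m-1$ and comparing coefficients of $t^k$ yields the summation identity
$$E_{A_1^\vee\oplus A_n^\vee}(k)=2\sum_{j=0}^{k}E_{A_n^\vee}(j)-E_{A_n^\vee}(k).$$
Now I would substitute the known value of $E_{A_n^\vee}$. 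Because that polynomial is a difference of consecutive powers, the partial sum telescopes to a single power, the two surviving terms combine, and one is left with exactly $(k+1)^n+k^n$, which is the promised closed form.

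For CL-ness I would argue directly from this closed form. If $z$ is a root, then $(z+1)^n=-z^n$, and taking absolute values gives $|z+1|=|z|$. Geometrically this says $z$ is equidistant from $0$ and from $-1$, which is precisely the condition $\operatorname{Re}(z)=-\tfrac12$. Hence every root of $E_{A_1^\vee\oplus A_n^\vee}$ lies on the canonical line, so $A_1^\vee\oplus A_n^\vee$ is a CL-polytope.

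The routine parts here are the coefficient extraction and the telescoping; the only place demanding a little care is checking that the factor $\tfrac{1+t}{1-t}$ is attached to the correct denominator so that the telescoping bookkeeping is exact. I expect this minor index check to be the main (and essentially only) obstacle: the root-location argument via $|z+1|=|z|$ is immediate once the factored form $(k+1)^n+k^n$ is secured, so the genuine content of the proposition lies in the closed-form evaluation rather than in the root distribution itself.
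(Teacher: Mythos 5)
Your proposal is correct, but it reaches both halves of the proposition by a genuinely different route than the paper. For the closed form, the paper expands $\delta_{A_1^\vee\oplus A_n^\vee}$ into Eulerian numbers, reindexes the resulting binomial sums, and finishes with Worpitzky's identity; you instead factor the Ehrhart series as $\frac{1+t}{1-t}\,\mathrm{Ehr}_{A_n^\vee}(t)$, extract $E_{A_1^\vee\oplus A_n^\vee}(k)=2\sum_{j=0}^{k}E_{A_n^\vee}(j)-E_{A_n^\vee}(k)$, and telescope using $E_{A_n^\vee}(j)=(j+1)^{n+1}-j^{n+1}$ from \cite[Lemma 5.3]{HKM:2017}. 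Your route is shorter and avoids Eulerian numbers entirely, at the fair price of quoting the known Ehrhart polynomial of $A_n^\vee$, which the paper itself records. For CL-ness the difference is more substantive: the paper identifies the resulting polynomial with the Ehrhart polynomial of $C_{n+1}^\vee$ and outsources the root location to \cite{HY:2021}, whereas your argument --- a root satisfies $(z+1)^{N}=-z^{N}$, hence $|z+1|=|z|$, hence $z$ lies on the perpendicular bisector $\mathrm{Re}(z)=-\frac{1}{2}$ of $0$ and $-1$ --- is self-contained and makes the proposition independent of that reference; this is a genuine gain in transparency. One index correction you should make: the telescoping yields $\sum_{j=0}^{k}E_{A_n^\vee}(j)=(k+1)^{n+1}$ and hence $E_{A_1^\vee\oplus A_n^\vee}(k)=(k+1)^{n+1}+k^{n+1}$, not $(k+1)^{n}+k^{n}$ as you wrote; since $\dim(A_1^\vee\oplus A_n^\vee)=n+1$, the degree must be $n+1$, and indeed the paper's own proof derives the $(n+1)$-st powers, so the exponent $n$ in the displayed statement is a typo of the paper that you have inherited. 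This slip does not damage your argument, as the modulus computation places the roots on $\mathrm{Re}(z)=-\frac{1}{2}$ for either exponent.
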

\begin{proof}
Since $\delta_{A_1^\vee}(t)=1+t$ and $\delta_{A_n^\vee}=\sum_{i=1}^n\eulerian{n+1}{i}t^i$, we have 
$$
%   \delta_i(A^\vee_{1,n}) = \eulerian{n+1}{i}+\eulerian{n+1}{i-1} \quad (0\le i\le n+1) 
  \delta_i(A^\vee_1\oplus A^\vee_n) = \eulerian{n+1}{i}+\eulerian{n+1}{i-1} \quad (0\le i\le n+1) 
$$
using the convention that $\eulerian{n}{i}=0$ when $i<0$ or $i\ge n$. Thus, 
\begin{align*}
%   E_{A^\vee_{1,n}}(k) &= \sum_{j=0}^{n+1} (\eulerian{n+1}{j} + \eulerian{n+1}{j-1})\binom{n+1+k-j}{n+1}  \\
  E_{A^\vee_1\oplus A^\vee_n}(k) &= \sum_{j=0}^{n+1} \left(\eulerian{n+1}{j} + \eulerian{n+1}{j-1}\right)\binom{n+1+k-j}{n+1}  \\
                      &= \sum_{j=0}^{n} \eulerian{n+1}{j}\binom{n+1+k-j}{n+1}
                        +\sum_{j=1}^{n+1} \eulerian{n+1}{j-1}\binom{n+1+k-j}{n+1} \\
                      &= \sum_{j=0}^{n} \eulerian{n+1}{n-j}\binom{1+k+(n-j)}{n+1}
                        +\sum_{j=1}^{n+1} \eulerian{n+1}{n-j+1}\binom{k+(n-j+1)}{n+1} \\
                      &= \sum_{j'=0}^{n} \eulerian{n+1}{j'}\binom{1+k+j'}{n+1}
                        +\sum_{j''=0}^{n} \eulerian{n+1}{j''}\binom{k+j''}{n+1} 
                        \quad \begin{array}{l}(j'=n-j, \\ \: j''=n-j+1)\end{array}
                        \\
                      &= (k+1)^{n+1} + k^{n+1} .
\end{align*}
Here, the last equality is derived by Worpitzky's identity.

This polynomial $(k+1)^{n+1} + k^{n+1}$ equals the Ehrhart polynomial of the polar dual $C_{n+1}^\vee$ of the classical root polytope of type C 
and it is shown to be CL in \cite{HY:2021}. 
\end{proof}

% This theorem shows $A^\vee_{1,n} = A^\vee_1\oplus A^\vee_n= (A_1\times A_n)^\vee$ and $C_{n+1}^\vee$ have
This theorem shows $A^\vee_1\oplus A^\vee_n= (A_1\times A_n)^\vee$ and $C_{n+1}^\vee$ have
the same Ehrhart polynomial, though $A_1\times A_n$ and $C_{n+1}$ are not unimodularly equivalent since $C_{n+1}$ does not have the structure of the product of two polytopes.

\bigskip

The CL-ness of $A^\vee_m \oplus A^\vee_n$ with small $m$ and $n$ are
calculated by computer using Pari/GP. See appendix for the detail.
The results are summarized as shown in Table~\ref{table:Adual-mn}.
From the table, we have the following theorem.

\begin{table}[htb]
\caption{CL-ness of $A^\vee_m \oplus A^\vee_n$ with $m,n\le 20$}
\label{table:Adual-mn}
\begin{center}
\begin{tabular}{| l | l | l | l | l | l | l | l | l | l |} \hline
$m$ $\backslash$ $n$ & 0 & 1 & 2 & 3 & 4 & 5 & 6 & $7\sim 20$ & $\ge 21$ \\ \hline
0 & CL & CL & CL & CL & CL & CL & CL & CL & CL  \\ \hline 
1 & CL & CL & CL & CL & CL & CL & CL & CL & CL \\ \hline 
2 & CL & CL & CL & CL & CL & CL & not CL & not CL  & \\ \hline 
3 & CL & CL & CL & CL & CL & not CL & not CL & not CL  & \\ \hline
4 & CL & CL & CL & CL & not CL & not CL & not CL & not CL  & \\ \hline
5 & CL & CL & CL & not CL & not CL & not CL & not CL & not CL  & \\ \hline
6 & CL & CL & not CL & not CL & not CL & not CL & not CL & not CL  & \\ \hline
$7 \sim 20$ & CL & CL & not CL & not CL & not CL & not CL & not CL  & not CL & \\ \hline
$\ge 21$  & CL & CL &  &  &  &  &  &  &  \\ \hline 
\end{tabular}
\end{center}
\end{table}

\begin{thm} \label{thm:rank2cp}
% $A^\vee_{n,m}$ is CL if $n\le 1$, $m\le 1$, or $n+m\le 7$.
$A^\vee_m \oplus A^\vee_n$ is CL if $\min\{m,n\} \le 1$ or $m+n \le 7$.
\end{thm}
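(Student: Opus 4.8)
The plan is to separate the easy/known part $\min\{m,n\}\le 1$ from a finite check on the remaining pairs. If $\min\{m,n\}=0$, say $n=0$, then $A_0^\vee$ is a point and $A_m^\vee\oplus A_0^\vee=A_m^\vee$, which is CL by \cite{HKM:2017}. If $\min\{m,n\}=1$, then Proposition~\ref{prop:P2n} already yields $E_{A_1^\vee\oplus A_n^\vee}(k)=(k+1)^{n+1}+k^{n+1}$ together with its CL-ness. Hence it remains to treat the pairs with $2\le m\le n$ and $m+n\le 7$, namely $(m,n)\in\{(2,2),(2,3),(2,4),(3,3),(2,5),(3,4)\}$, a finite list.

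For each such pair I would first produce the Ehrhart polynomial explicitly. By \eqref{eq:product} and the Remark of Section~\ref{sec:posets}, $\delta_{A_m^\vee\oplus A_n^\vee}(t)=\bigl(\sum_{i=0}^m\eulerian{m+1}{i}t^i\bigr)\bigl(\sum_{j=0}^n\eulerian{n+1}{j}t^j\bigr)$, and $E_{A_m^\vee\oplus A_n^\vee}(k)=f^{\mathrm{Ehr}}$ applied to the coefficient vector of this product. Since $A_m^\vee\oplus A_n^\vee$ is reflexive, its Ehrhart polynomial $E(k)$ of degree $d=m+n$ satisfies $E(k)=(-1)^dE(-1-k)$, so after the shift $z=-\tfrac12+w$ the polynomial $q(w):=E(-\tfrac12+w)$ obeys $q(-w)=(-1)^dq(w)$. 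Thus $q(w)=w^{\varepsilon}\,g(w^2)$, where $\varepsilon\in\{0,1\}$ is the parity of $d$ and $g$ is a real polynomial of degree $\lfloor d/2\rfloor\le 3$ with positive leading coefficient. A root $w$ of $q$ lies on the imaginary axis precisely when $w^2$ is real and nonpositive; hence $A_m^\vee\oplus A_n^\vee$ is CL if and only if every root of $g$ is real and nonpositive.

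This last condition I would verify directly for each of the six polynomials $g$. The pairs $(2,2)$ and $(2,3)$ give a quadratic $g$, for which real nonpositive roots amount to a nonnegative discriminant together with nonnegative coefficients, both immediate. The pairs $(2,4),(3,3),(2,5),(3,4)$ give a cubic $g$; since $g$ has positive leading coefficient, it suffices to check that all its coefficients are nonnegative (so that by Descartes' rule $g$ has no positive root, forcing its unavoidable real root to be nonpositive) and that the cubic discriminant is nonnegative (forcing the remaining two roots to be real), which together give three real nonpositive roots. The main obstacle is precisely these cubic evaluations, above all the two boundary pairs $(2,5)$ and $(3,4)$ with $d=7$: the statement is sharp there, since the neighbouring pairs $(2,6)$ and $(3,5)$ fail, so the sign of the discriminant must be pinned down by exact rational arithmetic rather than by numerical estimation.
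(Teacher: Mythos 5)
Your proposal is correct, and its overall decomposition matches the paper's: the cases $\min\{m,n\}\le 1$ are disposed of via Proposition~\ref{prop:P2n} (note that you use $(k+1)^{n+1}+k^{n+1}$, which is what the paper's proof actually derives; the displayed statement of Proposition~\ref{prop:P2n} contains a typo), and the remaining six pairs $(2,2),(2,3),(2,4),(2,5),(3,3),(3,4)$ form a finite check. Where you genuinely differ is in how that finite check is certified. The paper's proof is simply a numerical root computation in Pari/GP, with the $\delta$-vectors and Ehrhart polynomials recorded in Table~\ref{table:mn-CLness} and the theorem read off from Table~\ref{table:Adual-mn}; no symbolic certificate is offered. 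You instead exploit the reflexive symmetry $E(k)=(-1)^dE(-1-k)$ to write $E(-\tfrac12+w)=w^{\varepsilon}g(w^2)$ with $\deg g\le 3$, reducing CL-ness to the exact condition that $g$ have only real nonpositive roots, checkable for $\deg g\le 3$ by nonnegativity of the coefficients together with nonnegativity of the discriminant. This criterion is necessary as well as sufficient (a real polynomial with positive leading coefficient whose roots are all real and nonpositive automatically has nonnegative coefficients), so your six checks cannot spuriously fail, and they upgrade the paper's numerical evidence to an exact rational-arithmetic proof — with, as you rightly stress, exact computation needed at the sharp boundary $d=7$, since $(2,6)$ and $(3,5)$ fail. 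What the paper's brute-force run buys instead is breadth: it simultaneously settles non-CL-ness for all $m,n\le 20$, which your argument does not address (though a negative coefficient or discriminant would equally certify non-CL). The only formal incompleteness is that you describe rather than carry out the six evaluations, but these are routine; for instance, $(m,n)=(2,2)$ gives $g(u)=\frac{3}{2}u^2+\frac{9}{4}u+\frac{11}{32}$ with discriminant $3>0$, confirming CL-ness.
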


It is not yet shown whether 
all the cases $m\ge 2$ and $n\ge 8$ (or vice versa) are not CL, 
though it is plausible that Theorem~\ref{thm:rank2cp} is also necessary
for $A^\vee_m \oplus A^\vee_n$ to be CL. 
By our computer calculation up to $n,m\le 20$, no other CL parameters are found other than shown above.

\bigskip
%%%%%%%%%%%%%%%%%%%%%%%%%%%%%%%%%%%%%%%%%%%%%%%%%%%%%%%%%%%%%%%%%%%%%%%%%%%%%%%%%%%%%%%%%%%%%%%%%%%%%%%%%%%%%%%%%%%%%%%%%%%%%%%%%%%%%%%%
%%%%%%%%%%%%%%%%%%%%%%%%%%%%%%%%%%%%%%%%%%%%%%%%%%%%%%%%%%%%%%%%%%%%%%%%%%%%%%%%%%%%%%%%%%%%%%%%%%%%%%%%%%%%%%%%%%%%%%%%%%%%%%%%%%%%%%%%
%%%%%%%%%%%%%%%%%%%%%%%%%%%%%%%%%%%%%%%%%%%%%%%%%%%%%%%%%%%%%%%%%%%%%%%%%%%%%%%%%%%%%%%%%%%%%%%%%%%%%%%%%%%%%%%%%%%%%%%%%%%%%%%%%%%%%%%%
%%%%%%%%%%%%%%%%%%%%%%%%%%%%%%%%%%%%%%%%%%%%%%%%%%%%%%%%%%%%%%%%%%%%%%%%%%%%%%%%%%%%%%%%%%%%%%%%%%%%%%%%%%%%%%%%%%%%%%%%%%%%%%%%%%%%%%%%
%%%%%%%%%%%%%%%%%%%%%%%%%%%%%%%%%%%%%%%%%%%%%%%%%%%%%%%%%%%%%%%%%%%%%%%%%%%%%%%%%%%%%%%%%%%%%%%%%%%%%%%%%%%%%%%%%%%%%%%%%%%%%%%%%%%%%%%%

\section{CL-ness of $A^\vee_{n_1}\oplus A^\vee_{n_2}\oplus\dots\oplus A^\vee_{n_r}$}

In the following theorems, we have families of $A^\vee_{p_1}\oplus A^\vee_{p_2}\oplus\dots\oplus A^\vee_{p_r}$ that are CL.
In what follows, we denote $\underbrace{A^\vee_p\oplus A^\vee_p\oplus\dots\oplus A^\vee_p}_{n}$ as $({A^\vee_p})^{\oplus n}$.

\begin{prop}[{\cite[Example 3.3]{HKM:2017}}]\label{thm:1111}
$({A^\vee_1})^{\oplus n}$ is CL for any $n$.
\end{prop}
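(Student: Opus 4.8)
The plan is to reduce the statement to the special criterion for CL-ness already recalled in the introduction, namely that a reflexive polytope whose $\delta$-polynomial has all its roots on the unit circle $|z|=1$ is automatically CL (this is the consequence of \cite{RV:2002}). For the free sum $(A_1^\vee)^{\oplus n}$ this criterion applies verbatim, so no genuinely new work is needed.

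First I would record that $A_1 = {\rm conv}(\{\pm e_1\}) = [-1,1]$ is self-dual, so $A_1^\vee$ is again the segment $[-1,1]$, whose $\delta$-polynomial is $\delta_{A_1^\vee}(t) = 1+t$. Since each $A_1^\vee$ is reflexive, the free sum $(A_1^\vee)^{\oplus n}$ is reflexive as well, by the fact stated in the introduction that a free sum is reflexive if and only if each summand is. Then I would apply the multiplicativity formula \eqref{eq:product} repeatedly to obtain
$$\delta_{(A_1^\vee)^{\oplus n}}(t) = \bigl(\delta_{A_1^\vee}(t)\bigr)^n = (1+t)^n.$$
Every root of $(1+t)^n$ equals $-1$ and hence lies on the unit circle $|z|=1$. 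Invoking the criterion from \cite{RV:2002} recalled in the introduction then yields that all roots of the Ehrhart polynomial $E_{(A_1^\vee)^{\oplus n}}(k)$ lie on the line ${\rm Re}(z) = -\frac{1}{2}$, i.e., $(A_1^\vee)^{\oplus n}$ is CL.

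As a cross-check, one could instead identify $(A_1^\vee)^{\oplus n}$ with the cross polytope $\mathrm{Cr}_n$, using that $A_1^\vee$ is unimodularly equivalent to $T_1$ together with the unimodular equivalence $\mathrm{Cr}_d \cong \underbrace{T_1\oplus\cdots\oplus T_1}_d$ recorded in the introduction; for $\mathrm{Cr}_d$ the CL-ness was already observed there, via the same $|z|=1$ argument applied to $\delta_{\mathrm{Cr}_d}(t) = (1+t)^d$.

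There is essentially no obstacle in this argument: the only things requiring verification are that $\delta_{A_1^\vee}(t) = 1+t$ and that the hypotheses of the $|z|=1$ criterion are met, both of which are immediate. The reason this degenerate case is stated separately — and attributed to \cite[Example 3.3]{HKM:2017} — is that it serves as the base case on which the genuinely harder families $A_m^\vee \oplus (A_1^\vee)^{\oplus n}$ with $m = 2,3$ are built, where the $\delta$-polynomial roots no longer all satisfy $|z|=1$ and the direct criterion fails.
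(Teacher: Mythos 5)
Your proof is correct and is essentially the paper's own argument: the paper's one-line proof identifies $({A^\vee_1})^{\oplus n}$ with the cross polytope $\mathrm{Cr}_n$ and cites \cite[Example 3.3]{HKM:2017} --- exactly your cross-check --- while your main line (multiplicativity \eqref{eq:product} giving $\delta_{(A^\vee_1)^{\oplus n}}(t)=(1+t)^n$, all roots on $|z|=1$, then the criterion of \cite{RV:2002}) is precisely the argument the paper itself records in its introduction for free sums of $T_1$'s. Nothing is missing; your version merely inlines the content that the paper delegates to the citation.
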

\begin{proof}
This ${A^\vee_1}\,^{\oplus n}$ is the $n$-dimensional cross polytope $\mathrm{Cr}_n$, and is shown to be CL in \cite[Example 3.3]{HKM:2017}. 
\end{proof}

% We can further show that $A^\vee_{21^m}$ and $A^\vee_{3^m}$ are also CL.
We can further show that $A^\vee_2\oplus (A^\vee_1)^{\oplus n}$ and $A^\vee_3\oplus (A^\vee_1)^{\oplus n}$ are also CL.
For these, we use the following lemma. Here, $R$ is the canonical line $\mathrm{Re}(z)=-1/2$, and 
two functions $f(x)$ and $g(x)$ with $\deg f = \deg g + 1$ are \textit{$R$-interlacing} 
if all the zeros of $f(x)$ and $g(x)$ are on $R$ and they appear alternatingly on $R$. 
That is, the zeros of $f$ are $-1/2+z_1 i, -1/2+z_2 i, \dots, -1/2+z_d i$ and those of $g$ are $-1/2+w_1 i, -1/2+w_2 i, \dots, -1/2+w_{d-1} i$, 
with $z_1< w_1 < z_2 < w_2 < \dots <w_{d-1} < z_d$, where $d=\deg f$.  

\begin{lemma}[{\cite[Lemma 2.5]{HKM:2017}}] \label{lemma:CLrelation}
Let $f_1, f_2$, and $f_3$ be real monic polynomials such that
$\deg f_1 = \deg f_2 +1 = \deg f_3 +2$ and $f_1(x) = f_2(x)\cdot(x+\frac{1}{2}) + \beta f_3(x)$ for some $\beta>0$.
Then $f_1$ and $f_2$ are $R$-interlacing if and only if $f_2$ and $f_3$ are $R$-interlacing. 
\end{lemma}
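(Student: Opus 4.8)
The plan is to move the vertical line $R$ to the real axis by the substitution $x = -\tfrac12 + iy$ and to reduce the claim to the classical interlacing behaviour of a real three-term recurrence. Write $d = \deg f_1$, so $\deg f_2 = d-1$ and $\deg f_3 = d-2$, and define
$$ u_1(y) = i^{-d} f_1\!\left(-\tfrac12 + iy\right),\quad u_2(y) = i^{-(d-1)} f_2\!\left(-\tfrac12 + iy\right),\quad u_3(y) = i^{-(d-2)} f_3\!\left(-\tfrac12 + iy\right). $$
First I would record two bookkeeping facts. Since each $f_j$ is real and monic, the top-degree term of $f_j(-\tfrac12 + iy)$ in $y$ is $(iy)^{\deg f_j}$, so $u_1,u_2,u_3$ are monic of degrees $d, d-1, d-2$. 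Moreover, writing $f_j = \prod_k\bigl(x - (-\tfrac12 + i c_k)\bigr)$, one checks that $f_j$ has all of its roots on $R$ if and only if the corresponding $u_j$ is a real polynomial all of whose roots are real; in that case the roots of $u_j$ are exactly the numbers $c_k$, so that $R$-interlacing of two of the $f_j$ is equivalent to ordinary interlacing on the real line of the corresponding real-rooted real polynomials $u_j$.

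Next I would translate the hypothesis. Substituting $x = -\tfrac12 + iy$ into $f_1 = f_2\cdot(x+\tfrac12) + \beta f_3$ gives $i^{d} u_1 = iy\cdot i^{d-1} u_2 + \beta\, i^{d-2} u_3$; dividing by $i^{d}$ and using $i^{-2} = -1$ yields the real three-term recurrence
$$ u_1 = y\,u_2 - \beta\, u_3, \qquad \beta > 0. $$
This holds as an identity of polynomials irrespective of any reality. The point is that in each direction of the proof I will assume that two of the three $f_j$ have all their roots on $R$, which makes the two corresponding $u_j$ genuinely real; the recurrence then forces the third $u_j$ to be real as well, so reality never has to be assumed for the polynomial whose roots I am trying to locate.

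With this in place, the heart of the argument is a Sturm-type sign count. Assume first that $f_2, f_3$ are $R$-interlacing, so that $u_2, u_3$ are real and real-rooted with interlacing roots $w_1 < \cdots < w_{d-1}$ and $v_1 < \cdots < v_{d-2}$ satisfying $w_k < v_k < w_{k+1}$. Evaluating the recurrence at $w_k$ gives $u_1(w_k) = -\beta\, u_3(w_k)$; since $u_3$ is monic and exactly $d-1-k$ of its roots exceed $w_k$, this yields $\operatorname{sign} u_1(w_k) = (-1)^{d-k}$. These signs alternate in $k$, producing a root of $u_1$ in each interval $(w_k, w_{k+1})$, and comparing with the signs of the monic degree-$d$ polynomial $u_1$ at $\pm\infty$ yields one further root in each of $(-\infty, w_1)$ and $(w_{d-1}, +\infty)$. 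This locates all $d$ roots of $u_1$ on the real axis, interlacing with those of $u_2$, i.e.\ $f_1, f_2$ are $R$-interlacing. The reverse implication is the mirror image: assuming $f_1, f_2$ are $R$-interlacing, I would apply the same count to $u_3 = (y u_2 - u_1)/\beta$, using $u_3(w_k) = -u_1(w_k)/\beta$ and the interlacing of $u_1, u_2$ to obtain alternating signs of $u_3$ at the $w_k$, and hence the interlacing of $u_2, u_3$.

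I expect the genuine obstacle to lie in the reality bookkeeping rather than in the sign count: one must keep track of the fact that each $u_j$ is a priori a complex polynomial, whose reality is secured only through the interlacing hypotheses together with the recurrence, not a priori. Once this is arranged --- and once one notices that the positivity $\beta > 0$ is exactly what makes the signs at the $w_k$ alternate correctly --- the two directions become symmetric elementary arguments. The low-degree base case $d = 2$, where $f_3$ is the constant $1$ and the interlacing condition degenerates to the requirement that the single root of $f_2$ lie on $R$, I would treat separately.
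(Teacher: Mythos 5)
Your proof is correct. One point of comparison to note first: the paper does not actually prove this lemma --- it imports it verbatim from \cite[Lemma 2.5]{HKM:2017} --- so there is no internal proof to measure you against; your argument is a complete, self-contained proof along the standard lines used in that reference. The substitution $x=-\tfrac12+iy$, normalized by $i^{-\deg f_j}$, correctly turns the hypothesis into the real three-term recurrence $u_1=y\,u_2-\beta u_3$ (the sign flip from $i^{-2}=-1$ is right, and it is exactly what makes $\beta>0$ produce the correct sign alternation), and your Sturm-type count checks out in both directions: in the forward direction $\operatorname{sign}u_1(w_k)=(-1)^{d-k}$ gives $d-2$ interior roots plus one in each unbounded interval, accounting for all $d$ roots of $u_1$; in the reverse direction $u_3(w_k)=-u_1(w_k)/\beta$ with $\operatorname{sign}u_1(w_k)=(-1)^{d-k}$ gives $d-2$ alternations, accounting for all roots of $u_3$, with strictness in each direction guaranteed by the strictness of the assumed interlacing. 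You also correctly identified and resolved the one genuine subtlety, namely that the $u_j$ corresponding to the polynomial whose roots are being located is not a priori real (e.g.\ $f_1=x$ gives $u_1=y+i/2$), and that its reality must come from the recurrence together with the reality of the other two $u_j$'s, which in turn follows from their monicity and real-rootedness. A minor simplification: the case $d=2$ does not need separate treatment --- with $u_3=1$ the same count gives $u_1(w_1)=-\beta<0$ and sign $(-1)^d=+1$ at both ends, yielding the two real roots of $u_1$ straddling $w_1$, while the reverse implication is vacuous since $R$-interlacing of $f_2$ with the constant $f_3$ reduces to the root of $f_2$ lying on $R$, which is already contained in the hypothesis.
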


Note that, when we use this lemma for three Ehrhart polynomials $E_1, E_2$, and $E_3$, 
the relation in the lemma should be 
$$ E_1(k) = \alpha E_2(k)\cdot (2k+1) + (1-\alpha)E_3(k)\; \text{ for some } \; 0\le \alpha\le 1.$$ 
See \cite[Section 3]{HKM:2017}. 
%since the constant term of Ehrhart polynomials equals one.

\begin{thm}\label{thm:2111}
$A^\vee_2\oplus (A^\vee_1)^{\oplus n}$ is CL for any $n$.
\end{thm}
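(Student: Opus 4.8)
The plan is to express $E_{A_2^\vee\oplus(A_1^\vee)^{\oplus n}}(k)$ through the Ehrhart polynomials of cross polytopes and then invoke the interlacing lemma (Lemma~\ref{lemma:CLrelation}). First I would record the $\delta$-data: by \eqref{eq:product} together with $\delta_{A_1^\vee}(t)=1+t$ and $\delta_{A_2^\vee}(t)=1+4t+t^2$ (the latter from the Remark, since $\eulerian{3}{0},\eulerian{3}{1},\eulerian{3}{2}=1,4,1$), we obtain $\delta_{A_2^\vee\oplus(A_1^\vee)^{\oplus n}}(t)=(1+4t+t^2)(1+t)^n$, so that the Ehrhart series is $(1+4t+t^2)(1+t)^n/(1-t)^{n+3}$. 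Write $E_n(k):=E_{A_2^\vee\oplus(A_1^\vee)^{\oplus n}}(k)$ and recall $\mathrm{Ehr}_{\mathrm{Cr}_d}(t)=(1+t)^d/(1-t)^{d+1}$.

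The heart of the argument is the identity
\[
 E_n(k)=\alpha_n(2k+1)E_{\mathrm{Cr}_{n+1}}(k)+(1-\alpha_n)E_{\mathrm{Cr}_{n}}(k),\qquad \alpha_n=\frac{3}{2(n+2)} ,
\]
which I would prove via generating functions. Multiplication by $2k+1$ corresponds to the operator $2t\frac{d}{dt}+1$ on Ehrhart series, and a short computation gives
\[
 \sum_{k\ge0}(2k+1)E_{\mathrm{Cr}_{n+1}}(k)t^k=\Bigl(2t\tfrac{d}{dt}+1\Bigr)\frac{(1+t)^{n+1}}{(1-t)^{n+2}}=\frac{(1+t)^n\bigl(1+(4n+6)t+t^2\bigr)}{(1-t)^{n+3}} .
\]
Forming the combination $\alpha_n(\cdots)+(1-\alpha_n)(1+t)^n/(1-t)^{n+1}$ over the common denominator $(1-t)^{n+3}$, the numerator bracket becomes $\alpha_n\bigl(1+(4n+6)t+t^2\bigr)+(1-\alpha_n)(1-2t+t^2)$; its constant and $t^2$ coefficients are automatically $1$, while the coefficient of $t$ equals $4\alpha_n(n+2)-2$, which is exactly $4$ precisely when $\alpha_n=\frac{3}{2(n+2)}$. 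Thus the bracket equals $1+4t+t^2$ and the right-hand side has Ehrhart series $(1+4t+t^2)(1+t)^n/(1-t)^{n+3}=\mathrm{Ehr}_{E_n}(t)$, establishing the identity.

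It remains to feed this into Lemma~\ref{lemma:CLrelation}. Since $\delta_{\mathrm{Cr}_d}(t)=(1+t)^d$ has all roots on $|z|=1$, the cross polytopes are CL; moreover consecutive ones are $R$-interlacing, which follows from the analogous relation $E_{\mathrm{Cr}_{d+1}}(k)=\frac{1}{d+1}(2k+1)E_{\mathrm{Cr}_{d}}(k)+\frac{d}{d+1}E_{\mathrm{Cr}_{d-1}}(k)$ (proved by the same generating-function computation, the bracket there collapsing to $(1+t)^2$) and Lemma~\ref{lemma:CLrelation} by induction, the base cases being immediate. Now $\deg E_n=n+2=\deg E_{\mathrm{Cr}_{n+1}}+1=\deg E_{\mathrm{Cr}_n}+2$ and $\alpha_n\in(0,1)$ for every $n\ge0$, so the displayed identity is exactly of the form to which the Ehrhart version of Lemma~\ref{lemma:CLrelation} applies. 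As $E_{\mathrm{Cr}_{n+1}}$ and $E_{\mathrm{Cr}_n}$ are $R$-interlacing, the lemma yields that $E_n$ and $E_{\mathrm{Cr}_{n+1}}$ are $R$-interlacing; in particular all roots of $E_n$ lie on $\mathrm{Re}(z)=-\tfrac12$, i.e.\ $A_2^\vee\oplus(A_1^\vee)^{\oplus n}$ is CL.

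The step I expect to be the main obstacle is discovering and proving the identity. The naive attempt to interlace $E_n$ against the previous member of the same family fails (no relation of the form $E_n=\alpha(2k+1)E_{n-1}+(1-\alpha)E_{n-2}$ holds; it already breaks at $n=2$, where the coefficient of the middle power cannot be matched). The crucial insight is that the correct auxiliary Ehrhart polynomials are those of the cross polytopes $\mathrm{Cr}_{n+1}$ and $\mathrm{Cr}_n$, together with the specific mixing constant $\alpha_n=\frac{3}{2(n+2)}$, which must be checked to lie in $(0,1)$. Everything else—the generating-function manipulation, the interlacing of consecutive cross polytopes, and the degree bookkeeping—is routine.
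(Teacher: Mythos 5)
Your proposal is correct and matches the paper's proof essentially verbatim: you find the same identity $E_{A_2^\vee\oplus(A_1^\vee)^{\oplus n}}(k)=\frac{3}{2n+4}(2k+1)E_{\mathrm{Cr}_{n+1}}(k)+\frac{2n+1}{2n+4}E_{\mathrm{Cr}_n}(k)$, verify it by the same Ehrhart-series computation with the operator $2t\frac{d}{dt}+1$, and conclude via Lemma~\ref{lemma:CLrelation}. The only cosmetic difference is that the paper cites \cite[Corollary 5.4]{HKM:2017} for the $R$-interlacing of consecutive cross polytopes, whereas you reprove it inline by the same recurrence-plus-induction device, which is a correct (and self-contained) substitute.
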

\begin{proof}
We have the following equality: 
\begin{equation}
  E_{A^\vee_2\oplus (A^\vee_1)^{\oplus n}}(k) = \frac{3}{2n+4}E_{(A^\vee_1)^{\oplus (n+1)}}(k)\cdot (2k+1)  + \frac{2n+1}{2n+4} E_{(A^\vee_1)^{\oplus n}}(k). 
\label{eq:eq1}
\end{equation}
This follows from the relation of the Ehrhart series:
\begin{equation}
  \mathrm{Ehr}_{A^\vee_2\oplus (A^\vee_1)^{\oplus n}}(t) = \frac{3}{2n+4} \left( 2t\frac{d}{dt}\mathrm{Ehr}_{(A^\vee_1)^{\oplus (n+1)}}(t) 
                           + \mathrm{Ehr}_{(A^\vee_1)^{\oplus (n+1)}}(t) \right)
                         + \frac{2n+1}{2n+4} \mathrm{Ehr}_{(A^\vee_1)^{\oplus n}}(t).
\label{eqn:1ehr}
\end{equation}
The equation \eqref{eq:eq1} is derived by comparing the coefficients of $t^k$ in \eqref{eqn:1ehr}.
The equation \eqref{eqn:1ehr} can be verified using 
$\mathrm{Ehr}_{A^\vee_2\oplus (A^\vee_1)^{\oplus n}}(t)=\frac{(1+4t+t^2)(t+1)^n}{(1-t)^{n+3}}$ and
$\mathrm{Ehr}_{(A^\vee_1)^{\oplus n}}(t)=\frac{(1+t)^n}{(1-t)^{n+1}}$ as follows:
\begin{align}
\text{RHS of \eqref{eqn:1ehr}} =& 
\frac{3}{2n+4}\bigg(
2t \frac{d}{dt} \frac{(1+t)^{n+1}}{(1-t)^{n+2}} + \frac{(1+t)^{n+1}}{(1-t)^{n+2}} \bigg)
+ \frac{2n+1}{2n+4} \frac{(1+t)^n}{(1-t)^{n+1}} \nonumber \\
=&
\frac{(1+t)^n}{(1-t)^{n+3}}\bigg(
2t\frac{3}{2n+4}\big( (n+1)(1-t) + (1+t)(n+2) \big)  \nonumber \\
& \hspace{50mm} + \frac{3(1+t)(1-t)}{2n+4}
+ \frac{(2n+1)(1-t)^2}{2n+4} \bigg) \nonumber \\
=& \frac{(1+t)^n(1+4t+t^2)}{(1-t)^{n+3}} = \textrm{Ehr}_{A^\vee_2\oplus (A^\vee_1)^{\oplus n}}(t). \nonumber
\end{align}

Since the Ehrhart polynomials of $(A^\vee_1)^{\oplus (n+1)}$ and $(A^\vee_1)^{\oplus n}$ 
(i.e., the cross polytopes $\mathrm{Cr}_{n+1}$ and $\mathrm{Cr}_{n}$) are $R$-interlacing as shown in \cite[Corollary 5.4]{HKM:2017}, 
$A^\vee_2\oplus (A^\vee_1)^{\oplus n}$ and $(A^\vee_1)^{\oplus (n+1)}$ are $R$-interlacing by Lemma~\ref{lemma:CLrelation}. 
Hence, we conclude that $A^\vee_2\oplus (A^\vee_1)^{\oplus n}$ is CL.

\end{proof}

\begin{thm}\label{thm:3111}
% $A^\vee_{31^m}$ is CL for any $m$.
$A^\vee_3\oplus (A^\vee_1)^{\oplus n}$ is CL for any $n$.
\end{thm}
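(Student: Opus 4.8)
The plan is to follow the strategy of Theorem~\ref{thm:2111}: exhibit an explicit relation of the shape required by Lemma~\ref{lemma:CLrelation} that expresses $E_{A^\vee_3\oplus (A^\vee_1)^{\oplus n}}(k)$ through the Ehrhart polynomials of two \emph{consecutive} cross polytopes, which are already known to be $R$-interlacing. The algebraic input that makes this work is the factorization
$$\delta_{A^\vee_3}(t)=1+11t+11t^2+t^3=(1+t)(1+10t+t^2),$$
coming from $\eulerian{4}{j}=1,11,11,1$. Combined with $\delta_{A^\vee_1}(t)=1+t$ and \eqref{eq:product}, this gives $\delta_{A^\vee_3\oplus (A^\vee_1)^{\oplus n}}(t)=(1+10t+t^2)(1+t)^{n+1}$, hence $\mathrm{Ehr}_{A^\vee_3\oplus (A^\vee_1)^{\oplus n}}(t)=\frac{(1+10t+t^2)(1+t)^{n+1}}{(1-t)^{n+4}}$. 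The shifted power $(1+t)^{n+1}$ is precisely what signals that the cross polytopes of dimensions $n+1$ and $n+2$ are the correct auxiliary objects.

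I would then verify the generating-function identity
$$\mathrm{Ehr}_{A^\vee_3\oplus (A^\vee_1)^{\oplus n}}(t)=\frac{3}{n+3}\Big(2t\frac{d}{dt}\mathrm{Ehr}_{(A^\vee_1)^{\oplus (n+2)}}(t)+\mathrm{Ehr}_{(A^\vee_1)^{\oplus (n+2)}}(t)\Big)+\frac{n}{n+3}\mathrm{Ehr}_{(A^\vee_1)^{\oplus (n+1)}}(t).$$
Using $\mathrm{Ehr}_{(A^\vee_1)^{\oplus m}}(t)=\frac{(1+t)^m}{(1-t)^{m+1}}$ together with the elementary computation $2t\frac{d}{dt}\frac{(1+t)^m}{(1-t)^{m+1}}+\frac{(1+t)^m}{(1-t)^{m+1}}=\frac{(1+t)^{m-1}(1+(4m+2)t+t^2)}{(1-t)^{m+2}}$, the right-hand side collapses, after clearing the common factor $\frac{(1+t)^{n+1}}{(1-t)^{n+4}}$, to the single quadratic identity $\frac{1}{n+3}\big(3(1+(4n+10)t+t^2)+n(1-t)^2\big)=1+10t+t^2$, which is immediate. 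Comparing coefficients of $t^k$ then yields
$$E_{A^\vee_3\oplus (A^\vee_1)^{\oplus n}}(k)=\frac{3}{n+3}\,E_{(A^\vee_1)^{\oplus (n+2)}}(k)\cdot(2k+1)+\frac{n}{n+3}\,E_{(A^\vee_1)^{\oplus (n+1)}}(k),$$
which is exactly the relation stated in the remark following Lemma~\ref{lemma:CLrelation}, with $\alpha=\frac{3}{n+3}\in(0,1)$ for $n\ge 1$.

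Since $\mathrm{Cr}_{n+2}=(A^\vee_1)^{\oplus (n+2)}$ and $\mathrm{Cr}_{n+1}=(A^\vee_1)^{\oplus (n+1)}$ are $R$-interlacing by \cite[Corollary 5.4]{HKM:2017}, Lemma~\ref{lemma:CLrelation} then shows that $A^\vee_3\oplus (A^\vee_1)^{\oplus n}$ and $(A^\vee_1)^{\oplus (n+2)}$ are $R$-interlacing; in particular all roots of $E_{A^\vee_3\oplus (A^\vee_1)^{\oplus n}}(k)$ lie on $R$, so the polytope is CL. The remaining case $n=0$ is just $A^\vee_3$, which is already CL by \cite{HKM:2017}.

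The real difficulty is not the verification — once written down, everything reduces to the trivial quadratic equality above — but \emph{guessing the correct auxiliary family and the coefficients $\frac{3}{n+3},\frac{n}{n+3}$}. Natural first attempts, such as seeking a three-term relation among the $A^\vee_3\oplus(A^\vee_1)^{\oplus \bullet}$ themselves, or combining with $A^\vee_2\oplus(A^\vee_1)^{\oplus \bullet}$, fail: matching the $t$- and $t^2$-coefficients forces two incompatible values of the single parameter $\alpha$, so no relation of the lemma's form exists for those choices. It is exactly the factorization $1+11t+11t^2+t^3=(1+t)(1+10t+t^2)$ that selects consecutive cross polytopes as the auxiliary pair and renders $\alpha$ consistent.
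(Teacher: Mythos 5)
Your proof is correct and takes essentially the same route as the paper: you arrive at exactly the paper's key identity \eqref{eq:eq2}, $E_{A^\vee_3\oplus (A^\vee_1)^{\oplus n}}(k)=\frac{3}{n+3}E_{(A^\vee_1)^{\oplus(n+2)}}(k)\cdot(2k+1)+\frac{n}{n+3}E_{(A^\vee_1)^{\oplus(n+1)}}(k)$, verify it through the same Ehrhart-series relation \eqref{eqn:2ehr}, and conclude via Lemma~\ref{lemma:CLrelation} using the $R$-interlacing of the cross polytopes $\mathrm{Cr}_{n+2}$ and $\mathrm{Cr}_{n+1}$. Your added touches — the factorization $1+11t+11t^2+t^3=(1+t)(1+10t+t^2)$ motivating the choice of auxiliary family, and the separate check of $n=0$ where $\alpha=1$ falls outside the lemma's hypothesis — are minor refinements of the same argument.
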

\begin{proof}
We have the following equality: 
\begin{equation}
%   E_{A^\vee_{3 1^m}}(k) = \frac{3}{m+3}E_{A^\vee_{1^{m+2}}}(k)\cdot(2k+1)  + \frac{m}{m+3} E_{A^\vee_{1^{m+1}}}(k). 
  E_{A^\vee_3 \oplus (A^\vee_1)^{\oplus n}}(k) = \frac{3}{n+3}E_{(A^\vee_1)^{\oplus(n+2)}}(k)\cdot(2k+1)  
  + \frac{n}{n+3} E_{(A^\vee_1)^{\oplus (n+1)}}(k). 
\label{eq:eq2}
\end{equation}

% This equation can be shown in the same way as in Theorem~\ref{thm:2111} with 
% %$\mathrm{Ehr}_{A^\vee_3\oplus (A^\vee_1)^{\oplus n}}(t)=\frac{(t^3+11t^2 + 11t+1)(t+1)^n}{(1-t)^{n+3}}$,
% and the statement follows from Lemma~\ref{lemma:CLrelation}.

This equation follows from 
\begin{equation}
  \mathrm{Ehr}_{A^\vee_3\oplus (A^\vee_1)^{\oplus n}}(t) = \frac{3}{n+3} \left( 2t\frac{d}{dt}\mathrm{Ehr}_{(A^\vee_1)^{\oplus (n+2)}}(t) 
                           + \mathrm{Ehr}_{(A^\vee_1)^{\oplus (n+2)}}(t) \right)
                         + \frac{n}{n+3} \mathrm{Ehr}_{(A^\vee_1)^{\oplus (n+1)}}(t).
\label{eqn:2ehr}
\end{equation}
as in Theorem~\ref{thm:2111}, and then the statement follows from Lemma~\ref{lemma:CLrelation}.

The equation \eqref{eqn:2ehr} is verified as follows:
\begin{align}
\text{RHS of \eqref{eqn:2ehr}} =& 
\frac{3}{n+3}\bigg(
2t \frac{d}{dt} \frac{(1+t)^{n+2}}{(1-t)^{n+3}} + \frac{(1+t)^{n+2}}{(1-t)^{n+3}} \bigg)
+ \frac{n}{n+4} \frac{(1+t)^{n+1}}{(1-t)^{n+2}} \nonumber \\
=&
\frac{(1+t)^{n+1}}{(1-t)^{n+3}}\bigg(
2t\frac{3}{n+3}\big( (n+2)(1-t) + (1+t)(n+3) \big)  \nonumber \\
& \hspace{50mm} + \frac{3(1+t)(1-t)}{n+3}
+ \frac{n(1-t)^2}{n+3} \bigg) \nonumber \\
=& \frac{(1+t)^{n+1}(1+10t+t^2)}{(1-t)^{n+4}}=\frac{(1+t)^n(1+11t+11t^2+t^3)}{(1-t)^{n+4}} = \textrm{Ehr}_{A^\vee_3\oplus (A^\vee_1)^{\oplus n}}(t). \nonumber
\end{align}

\end{proof}

\begin{remark}
Other than Proposition~\ref{thm:1111}, Theorems~\ref{thm:2111} and \ref{thm:3111}, $A^\vee_4\oplus (A^\vee_1)^{\oplus n}$ and $A^\vee_5\oplus (A^\vee_1)^{\oplus n}$
also seem to be CL by computer calculations for small $n$'s.
On the other hand, also from observation by computer calculation for small $n$'s,
$A^\vee_m\oplus (A^\vee_1)^{\oplus n}$ is not CL for $m\ge 7$ and $n\ge 2$.
The behavior of $A^\vee_6\oplus (A^\vee_1)^{\oplus n}$ is somewhat strange so that it is CL for odd $n$'s and not CL for even $n$'s. 
\end{remark}

\begin{remark}
In the proof of Theorems~\ref{thm:2111} and \ref{thm:3111}, the keys are the equations \eqref{eq:eq1} and \eqref{eq:eq2}.
Analogously, there are other relations among Ehrhart polynomials of $A^\vee_d$'s.
We have found the following equations, though we do not currently find any application.

\begin{multline}
(a) \quad
   E_{A^\vee_3\oplus (A^\vee_2)^{\oplus n}}(k) = \frac{2}{2n+3}E_{(A^\vee_2)^{\oplus (n+1)}}(k)\cdot(2k+1) + \frac{2n+1}{2n+3}E_{A^\vee_1\oplus (A^\vee_2)^{\oplus n}}(k)  
\nonumber
\end{multline}
\vspace{-3mm}
\begin{multline}
(b) \quad
   E_{A^\vee_3\oplus (A^\vee_1)^{\oplus n}}(k) = \frac{2}{n+3}E_{A^\vee_2\oplus (A^\vee_1)^{\oplus n}}(k)\cdot(2k+1) + \frac{2n+1}{n+3}E_{(A^\vee_1)^{\oplus (n+1)}}(k) \\
                                                                                                                      - \frac{n}{n+3}E_{(A^\vee_1)^{\oplus (n-1)}}(k)  
\nonumber
\end{multline}
\vspace{-3mm}
\begin{multline}
(c) \quad
   E_{A^\vee_4\oplus (A^\vee_1)^{\oplus n}}(k) = \frac{5}{2n+8}E_{A^\vee_3\oplus (A^\vee_1)^{\oplus n}}(k)\cdot(2k+1)  + \frac{5(4n+2)}{3(2n+8)}E_{A^\vee_2\oplus (A^\vee_1)^{\oplus n}}(k) \\
                                                                                                                        - \frac{14n+1}{3(2n+8)}E_{(A^\vee_1)^{\oplus n}}(k) 
\nonumber
\end{multline}
\vspace{-3mm}
\begin{multline}
(d) \quad
   E_{(A^\vee_2)^{\oplus 2}\oplus (A^\vee_1)^{\oplus n}}(k) = \frac{3}{2n+8}E_{A^\vee_2\oplus (A^\vee_1)^{\oplus (n+1)}}(k)\cdot(2k+1) + \frac{2n+3}{2n+8}E_{A^\vee_2\oplus (A^\vee_1)^{\oplus n}}(k) \\
                                                                                                                        +\frac{2}{2n+8}E_{(A^\vee_1)^{\oplus n}}(k) 
\nonumber
\end{multline}
\vspace{-3mm}
\begin{multline}
(e) \quad
  E_{A^\vee_3\oplus (A^\vee_1)^{\oplus n}}(k) = \frac{2}{n+3}E_{A^\vee_2\oplus (A^\vee_1)^{\oplus n}}(k)\cdot(2k+1) \\
                     + \frac{n+1}{n+3}\left(\frac{2n+1}{n+1}E_{(A^\vee_1)^{\oplus (n+1)}}(k) - \frac{n}{n+1}E_{(A^\vee_1)^{\oplus (n-1)}}(k) \right)
\nonumber
\end{multline}
\vspace{-3mm}
\begin{multline}
\phantom{(a)}\quad
  \frac{2n+1}{n+1}E_{(A^\vee_1)^{\oplus (n+1)}}(k) - \frac{n}{n+1}E_{(A^\vee_1)^{\oplus (n-1)}}(k)
                    = \frac{2n+1}{(n+1)^2} E_{(A^\vee_1)^{\oplus n}}(k) \cdot (2k+1)  \\
                     + \frac{n^2}{(n+1)^2} E_{(A^\vee_1)^{\oplus (n-1)}}(k)
\nonumber
\end{multline}
\vspace{-3mm}
\begin{multline}
(f) \quad
  E_{A^\vee_4\oplus (A^\vee_1)^{\oplus n}}(k) = \frac{5}{2n+8}E_{A^\vee_3\oplus (A^\vee_1)^{\oplus n}}(k)\cdot(2k+1) \\
                     + \frac{2n+3}{2n+8}\left(\frac{5(4n+2)}{3(2n+3)}E_{A^\vee_2\oplus (A^\vee_1)^{\oplus n}}(k) 
                     - \frac{14n+1}{3(2n+3)}E_{(A^\vee_1)^{\oplus n}}(k) \right) 
\nonumber
\end{multline}
\vspace{-3mm}
\begin{multline}
\phantom{(a)} \quad
  \frac{5(4n+2)}{3(2n+3)}E_{(A^\vee_2)\oplus (A^\vee_1)^{\oplus n}}(k) - \frac{14n+1}{3(2n+3)}E_{(A^\vee_1)^{\oplus n}}(k)\\
                    = \frac{5(2n+1)}{(2n+3)(n+2)} E_{(A^\vee_1)^{\oplus (n+1)}}(k) \cdot (2k+1) 
                     + \frac{(n-1)(2n-1)}{(2n+3)(n+2)} E_{(A^\vee_1)^{\oplus n}}(k) 
\nonumber
\end{multline}
\vspace{-3mm}
\begin{multline}
(f') \quad
  E_{A^\vee_4\oplus (A^\vee_1)^{\oplus n}}(k) = \frac{15}{(2n+8)(n+3)}E_{(A^\vee_1)^{\oplus (n+2)}}(k)\cdot(2k+1)^2 \\
                     + \frac{15(n^2+3n+1)}{2(n+2)(n+3)(n+4)}E_{(A^\vee_1)^{\oplus (n+1)}}(k)\cdot(2k+1) 
                     + \frac{(2n-1)(n-1)}{2(n+2)(n+4)}E_{(A^\vee_1)^{\oplus n}}(k) 
\nonumber
\end{multline}
\end{remark}

\bigskip
%%%%%%%%%%%%%%%%%%%%%%%%%%%%%%%%%%%%%%%%%%%%%%%%%%%%%%%%%%%%%%%%%%%%%%%%%%%%%%%%%%%%%%%%%%%%%%%%%%%%%%%%%%%%%%%%%%%%%%%%%%%%%%%%%%%%%%%%
%%%%%%%%%%%%%%%%%%%%%%%%%%%%%%%%%%%%%%%%%%%%%%%%%%%%%%%%%%%%%%%%%%%%%%%%%%%%%%%%%%%%%%%%%%%%%%%%%%%%%%%%%%%%%%%%%%%%%%%%%%%%%%%%%%%%%%%%
%%%%%%%%%%%%%%%%%%%%%%%%%%%%%%%%%%%%%%%%%%%%%%%%%%%%%%%%%%%%%%%%%%%%%%%%%%%%%%%%%%%%%%%%%%%%%%%%%%%%%%%%%%%%%%%%%%%%%%%%%%%%%%%%%%%%%%%%
%%%%%%%%%%%%%%%%%%%%%%%%%%%%%%%%%%%%%%%%%%%%%%%%%%%%%%%%%%%%%%%%%%%%%%%%%%%%%%%%%%%%%%%%%%%%%%%%%%%%%%%%%%%%%%%%%%%%%%%%%%%%%%%%%%%%%%%%
%%%%%%%%%%%%%%%%%%%%%%%%%%%%%%%%%%%%%%%%%%%%%%%%%%%%%%%%%%%%%%%%%%%%%%%%%%%%%%%%%%%%%%%%%%%%%%%%%%%%%%%%%%%%%%%%%%%%%%%%%%%%%%%%%%%%%%%%

\section{Free sums of $A_d$'s}
In the previous sections, we have studied the root distributions of the Ehrhart polynomials of the free sums of $A^\vee_d$'s.
It is also of interest in studying the free sums of other reflexive polytopes.
For example, how about the free sums of the classical root polytopes $A_d$'s?
Note that since $A_1=A_1^\vee$, the CL-ness and the $R$-interlacing property for $A_1^{\oplus n}=\mathrm{Cr}_n$ also hold. %by \cite{HKM:2017}. 

For the root polytope of type A, the Ehrhart polynomial and the $\delta$-polynomial
known to be as follows (\cite[Theorem~1]{BDV:1999}, \cite[Theorem~2]{ABHPS:2011}):
$$ 
E_{A_d}(k) = \sum_{j=0}^d \binom{d}{j}^2 \binom{k+d-j}{d}, \quad \delta_{A_d}(t) = \sum_{j=0}^d \binom{d}{j}^2 t^j.
$$

We have the following several analogous results. 

\begin{thm}\label{thm:A1n}
$A_1\oplus A_n$ is CL for any $n\ge 1$.
\end{thm}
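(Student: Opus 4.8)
The plan is to follow the template of the proofs of Theorems~\ref{thm:2111} and~\ref{thm:3111}: produce a three-term relation that puts $E_{A_1\oplus A_n}(k)$ into the shape demanded by Lemma~\ref{lemma:CLrelation}, and then feed in an interlacing pair one dimension lower. Since $A_1=A_1^\vee$ has $\delta_{A_1}(t)=1+t$, the free-sum factorization~\eqref{eq:product} gives $\delta_{A_1\oplus A_n}(t)=(1+t)\delta_{A_n}(t)$, so that $\mathrm{Ehr}_{A_1\oplus A_n}(t)=\frac{(1+t)\delta_{A_n}(t)}{(1-t)^{n+2}}$. Writing $P_n(t)=\delta_{A_n}(t)=\sum_{j=0}^n\binom{n}{j}^2 t^j$, I would first record the Ehrhart-series identity
$$
\mathrm{Ehr}_{A_1\oplus A_n}(t)=\frac{1}{n+1}\left(2t\frac{d}{dt}\mathrm{Ehr}_{A_n}(t)+\mathrm{Ehr}_{A_n}(t)\right)+\frac{n}{n+1}\mathrm{Ehr}_{A_{n-1}}(t).
$$
Clearing the common denominator $(1-t)^{n+2}$ and simplifying collapses this to the single polynomial identity $nP_n(t)=2tP_n'(t)+n(1-t)P_{n-1}(t)$, which is routine: comparing coefficients of $t^j$ reduces it to $(n-2j)\binom{n}{j}^2=n\left(\binom{n-1}{j}^2-\binom{n-1}{j-1}^2\right)$, and this follows at once from $\binom{n-1}{j}=\frac{n-j}{n}\binom{n}{j}$ and $\binom{n-1}{j-1}=\frac{j}{n}\binom{n}{j}$.

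Comparing coefficients of $t^k$ on both sides (using $2t\frac{d}{dt}\mathrm{Ehr}_Q(t)+\mathrm{Ehr}_Q(t)=\sum_k(2k+1)E_Q(k)t^k$) then yields the Ehrhart-polynomial relation
$$
E_{A_1\oplus A_n}(k)=\frac{1}{n+1}E_{A_n}(k)\cdot(2k+1)+\frac{n}{n+1}E_{A_{n-1}}(k),
$$
with $\alpha=\frac{1}{n+1}\in(0,1)$ for every $n\ge 1$. Lemma~\ref{lemma:CLrelation} now applies directly: \emph{provided} $E_{A_n}$ and $E_{A_{n-1}}$ are $R$-interlacing, it follows that $E_{A_1\oplus A_n}$ and $E_{A_n}$ are $R$-interlacing, and in particular all roots of $E_{A_1\oplus A_n}$ lie on $R$, i.e.\ $A_1\oplus A_n$ is CL.

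The remaining input, and the step I expect to be the main obstacle, is the $R$-interlacing of the consecutive type-A Ehrhart polynomials $E_{A_n}$ and $E_{A_{n-1}}$. Unlike the cross-polytope interlacing invoked in Theorems~\ref{thm:2111} and~\ref{thm:3111} (supplied by \cite[Corollary 5.4]{HKM:2017}), this pairing is not quite off the shelf, so I would secure it by a parallel induction built on the auxiliary recursion
$$
E_{A_n}(k)=\frac{2n-1}{n^2}E_{A_{n-1}}(k)\cdot(2k+1)+\frac{(n-1)^2}{n^2}E_{A_{n-2}}(k),
$$
whose two coefficients are positive and sum to $1$ (indeed $\frac{2n-1}{n^2}+\frac{(n-1)^2}{n^2}=1$), hence lie in $(0,1)$ for every $n\ge 2$. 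This recursion is verified in exactly the same way as the main one, through the Ehrhart series and a binomial identity for $P_n$, and Lemma~\ref{lemma:CLrelation} then propagates interlacing upward from the base pair $E_{A_1}(k)=2k+1$ and $E_{A_0}(k)=1$. The only genuinely delicate part is the bookkeeping in checking the two series identities and confirming that the coefficients land in the required ranges; the structural backbone is a clean double application of Lemma~\ref{lemma:CLrelation}.
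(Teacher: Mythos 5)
Your proof is correct and is essentially the paper's: the key relation is exactly \eqref{eq:A_1A_n}, obtained from the same Ehrhart-series identity, and CL-ness is then drawn from Lemma~\ref{lemma:CLrelation} in the same way; your collapsed verification $nP_n(t)=2tP_n'(t)+n(1-t)P_{n-1}(t)$ is in fact tidier than the paper's term-by-term coefficient computation. The only place you diverge is out of unnecessary caution: the $R$-interlacing of $E_{A_n}$ and $E_{A_{n-1}}$ \emph{is} off the shelf --- the paper simply cites \cite{HKM:2017} for it --- so your auxiliary recursion $E_{A_n}(k)=\frac{2n-1}{n^2}E_{A_{n-1}}(k)\cdot(2k+1)+\frac{(n-1)^2}{n^2}E_{A_{n-2}}(k)$ is not needed; it is, however, correct (writing $a=n-j$, $b=j$, the coefficient identity reduces to $n^2\bigl(4ab+(a-b)^2\bigr)=n^4$), its coefficients lie in $(0,1)$ and sum to $1$ as Lemma~\ref{lemma:CLrelation} requires, and together with the degenerate base pair $E_{A_1}(k)=2k+1$, $E_{A_0}(k)=1$ it makes the argument self-contained by essentially re-deriving the interlacing result of \cite{HKM:2017} that the paper quotes.
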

\begin{proof}
We have the following equality:
\begin{equation}\label{eq:A_1A_n}
  E_{A_1\oplus A_n}(k) = \frac{1}{n+1}E_{A_n}(k)\cdot(2k+1) + \frac{n}{n+1}E_{A_{n-1}}(k).
\end{equation}

This relation follows from the following relation of the Ehrhart series:
\begin{equation}
  \textrm{Ehr}_{A_1\oplus A_n}(t) = 
  \frac{1}{n+1} \left(
     2\frac{d}{dt}\textrm{Ehr}_{A_n}(t)  + \textrm{Ehr}_{A_n}(t) \right)
   + \frac{n}{n+1}\textrm{Ehr}_{A_{n-1}}(t) ,
\label{eqn:rel-EhrA1n}
\end{equation}
which is verified as follows.
Since we have
$$
\textrm{Ehr}_{A_n} = \frac{ \sum_{j=0}^{n} \binom{n}{j}^2 t^j}{(1-t)^{n+1}},
\quad
\textrm{Ehr}_{A_1\oplus A_n} = \frac{ (1+t) \sum_{j=0}^{n} \binom{n}{j}^2 t^j}{(1-t)^{n+1}},
$$
the equation \eqref{eqn:rel-EhrA1n} is equivalent to 
$$
\frac{ (1+t) \sum_{i=0}^{n} \binom{n}{i}^2 }{(1-t)^{n+1}}
=
  \frac{1}{n+1} \left(
     2\frac{d}{dt} \frac{\sum_{j=0}^{n} \binom{n}{j}^2 }{(1-t)^{n+1}} 
     + \frac{\sum_{j=0}^{n} \binom{n}{j}^2 }{(1-t)^{n+1}} \right)
   + \frac{n}{n+1} \frac{\sum_{j=0}^{n-1} \binom{n-1}{j}^2 }{(1-t)^{n}}, 
$$
and we have
\begin{align}
(1+t)\sum_{j=0}^{n} \binom{n}{j}^2 t^j
=&
\frac{2t}{n+1} (1-t) \sum_{j=1}^{n}\binom{n}{j}^2t^{j-1}
+
2t \sum_{j=0}^{n}\binom{n}{j}^2 t^n \nonumber \\
&+
\frac{1}{n+1} (1-t) \sum_{j=0}^{n} \binom{n}{j}^2 t^n
+
\frac{n}{n+1} (1-t)^2 \sum_{j=0}^{n-1} \binom{n-1}{j}^2 t^j . \nonumber
\end{align}
By comparing the coefficients of $t^i$, what we have to show is
\begin{align}
\binom{n}{i}^2 &+ \binom{n}{i-1}^2 
=
\frac{2}{n+1}(i \binom{n}{i}^2 - (i-1)\binom{n}{i-1}^2) + 2\binom{n}{i-1}^2 
\nonumber \\
&+\frac{1}{n+1}(\binom{n}{i}^2 - \binom{n}{i-1}^2) 
+\frac{n}{n+1}(\binom{n-1}{i}^2 - 2\binom{n-1}{i-1}^2 + \binom{n-1}{i-2}^2), 
\label{eqn:coco}
\end{align}
where $\binom{n}{i}$ is assumed to be 0 when $i<0$ or $i>n$.
This is verified by
$$
\binom{n}{i}^2 + \binom{n}{i-1}^2
=\binom{n}{i}^2 + \frac{i^2}{(n-i+1)^2} \binom{n}{i}^2 = \frac{n^2-2in+2n+2i^2-2i+1}{(n-i+1)^2}\binom{n}{i}^2
$$
and 
\begin{align}
\text{RHS of \eqref{eqn:coco}}
=& 
\frac{2}{n+1} (i\binom{n}{i}^2 - (i-1)\binom{n}{i-1}^2) + 2\binom{n}{i-1}^2 +
\frac{1}{n+1} (\binom{n}{i}^2 - \binom{n}{i-1}^2) \nonumber \\
&+\frac{n}{n+1} (\frac{(n-i)^2}{n^2} \binom{n}{i}^2 
 - 2 \frac{(n-i+1)^2}{n^2} \binom{n}{i-1}^2 
 + \frac{(i-1)^2}{n^2} \binom{n}{i-1}^2 ) \nonumber \\
=&
\frac{n^2+n+i^2}{n(n+1)} \binom{n}{i}^2 + \frac{2ni-n-i^2+2i-1}{n(n+1)} \binom{n}{i-1}^2 \nonumber \\
=& \frac{n^2-2in+2n+2i^2-2i+1}{(n-i+1)^2} \binom{n}{i}^2 . \nonumber
\end{align}

Since the Ehrhart polynomials of $A_n$ and $A_{n-1}$ are $R$-interlacing as shown in \cite{HKM:2017}, 
the statement follows from Lemma~\ref{lemma:CLrelation} and \eqref{eq:A_1A_n}.
\end{proof}

\begin{prop} \label{thm:A111-211}
%$A_1^{\oplus n}$ and 
$A_2\oplus A_1^{\oplus n}$ are CL for any $n\ge 1$.
\end{prop}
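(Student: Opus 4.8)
The plan is to reduce the statement to Theorem~\ref{thm:2111} by observing that $A_2 \oplus A_1^{\oplus n}$ and $A_2^\vee \oplus (A_1^\vee)^{\oplus n}$ share the same $\delta$-polynomial, hence the same Ehrhart polynomial and the same root distribution. The crucial coincidence is that $\delta_{A_2}(t) = \delta_{A_2^\vee}(t)$: on the one hand, the formula $\delta_{A_d}(t) = \sum_{j=0}^d \binom{d}{j}^2 t^j$ gives $\delta_{A_2}(t) = 1 + 4t + t^2$; on the other hand, the Remark following Corollary~1 gives $\delta_{A_2^\vee}(t) = \sum_{j=0}^2 \eulerian{3}{j} t^j = 1 + 4t + t^2$. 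Together with $A_1 = A_1^\vee$, so that $\delta_{A_1}(t) = \delta_{A_1^\vee}(t) = 1+t$, the product formula \eqref{eq:product} yields
\[
\delta_{A_2 \oplus A_1^{\oplus n}}(t) = (1+4t+t^2)(1+t)^n = \delta_{A_2^\vee \oplus (A_1^\vee)^{\oplus n}}(t).
\]

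First I would record these two $\delta$-polynomial computations and deduce the displayed identity. Since the Ehrhart polynomial is recovered from the $\delta$-vector by the universal formula $E_Q(k) = f^\mathrm{Ehr}(\delta(Q))$, equality of $\delta$-polynomials forces $E_{A_2 \oplus A_1^{\oplus n}}(k) = E_{A_2^\vee \oplus (A_1^\vee)^{\oplus n}}(k)$ as polynomials in $k$; in particular they have exactly the same complex roots. Theorem~\ref{thm:2111} asserts that $A_2^\vee \oplus (A_1^\vee)^{\oplus n}$ is CL, i.e.\ all roots of its Ehrhart polynomial lie on $\mathrm{Re}(z) = -\tfrac12$, so the same holds verbatim for $A_2 \oplus A_1^{\oplus n}$, which is precisely the CL-ness claimed.

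There is essentially no obstacle beyond spotting the coincidence $\delta_{A_2} = \delta_{A_2^\vee}$, which is special to the low-dimensional cases $d \le 2$; it fails for general $d$, where $\binom{d}{j}^2 \neq \eulerian{d+1}{j}$, so this shortcut does not extend. Should a self-contained argument avoiding Theorem~\ref{thm:2111} be preferred, the alternative is to imitate the proofs of Theorems~\ref{thm:A1n} and \ref{thm:2111}: starting from $\mathrm{Ehr}_{A_2 \oplus A_1^{\oplus n}}(t) = \frac{(1+4t+t^2)(1+t)^n}{(1-t)^{n+3}}$, one differentiates the Ehrhart series to derive a three-term relation $E_{A_2 \oplus A_1^{\oplus n}}(k) = \alpha\, E_{A_1^{\oplus(n+1)}}(k)\cdot(2k+1) + (1-\alpha)\,E_{A_1^{\oplus n}}(k)$ with $0 < \alpha < 1$ (here $\alpha = \tfrac{3}{2n+4}$, and $A_1^{\oplus(n+1)} = \mathrm{Cr}_{n+1}$, $A_1^{\oplus n} = \mathrm{Cr}_n$), and then applies Lemma~\ref{lemma:CLrelation} together with the known $R$-interlacing of $E_{\mathrm{Cr}_{n+1}}$ and $E_{\mathrm{Cr}_n}$. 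Note this relation is literally \eqref{eq:eq1}, consistent with the equality of the two Ehrhart polynomials. Given the $\delta$-polynomial coincidence, however, the one-line reduction to Theorem~\ref{thm:2111} is the cleanest route, and that is the argument I would present.
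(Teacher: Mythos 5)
Your proposal is correct and matches the paper's own proof, which likewise reduces the statement to Theorem~\ref{thm:2111} via $E_{A_1}(k)=E_{A_1^\vee}(k)$ and $E_{A_2}(k)=E_{A_2^\vee}(k)$; you merely spell out the justification (the coincidence $\delta_{A_2}(t)=\delta_{A_2^\vee}(t)=1+4t+t^2$ together with the product formula \eqref{eq:product}) that the paper leaves implicit.
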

\begin{proof}
This follows from %Proposition~\ref{thm:1111} and 
Theorem~\ref{thm:2111}, since we have $E_{A_1}(k)=E_{A^\vee_1}(k)$ and $E_{A_2}(k)=E_{A^\vee_2}(k)$.
%(In fact, we have $A_1^{\oplus n}\simeq (A_1^\vee)^{\oplus n}\simeq \mathrm{Cr}_n$ and $A_2\simeq A_2^\vee$.)
\end{proof}

\begin{thm} \label{thm:A3111}
$A_3\oplus A_1^{\oplus n}$ is CL for any $n\ge 1$.
\end{thm}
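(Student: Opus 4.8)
The plan is to imitate the proofs of Theorems~\ref{thm:2111} and \ref{thm:3111} verbatim, replacing the dual polytope $A_3^\vee$ by $A_3$ throughout. First I would compute the Ehrhart series from the product formula \eqref{eq:product}: since $\delta_{A_3}(t) = 1+9t+9t^2+t^3$ and $\delta_{A_1^{\oplus n}}(t)=(1+t)^n$, we get $\delta_{A_3\oplus A_1^{\oplus n}}(t) = (1+9t+9t^2+t^3)(1+t)^n$. The key structural observation is the factorization $1+9t+9t^2+t^3 = (1+t)(1+8t+t^2)$, so that
$$ \mathrm{Ehr}_{A_3\oplus A_1^{\oplus n}}(t) = \frac{(1+t)^{n+1}(1+8t+t^2)}{(1-t)^{n+4}}. $$
This is exactly the shape that appeared for $A_3^\vee$ in Theorem~\ref{thm:3111}, but with the inner quadratic $1+10t+t^2$ replaced by $1+8t+t^2$.

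Next I would look for a relation of the form
$$ E_{A_3\oplus A_1^{\oplus n}}(k) = \alpha\, E_{A_1^{\oplus(n+2)}}(k)\cdot(2k+1) + (1-\alpha)\, E_{A_1^{\oplus(n+1)}}(k) $$
for a suitable $\alpha\in[0,1]$, which is precisely the hypothesis required by Lemma~\ref{lemma:CLrelation} (in the Ehrhart form noted after that lemma). As in Theorem~\ref{thm:3111}, this identity is obtained by comparing the coefficients of $t^k$ in the corresponding identity of Ehrhart series
$$ \mathrm{Ehr}_{A_3\oplus A_1^{\oplus n}}(t) = \alpha\Bigl(2t\tfrac{d}{dt}\mathrm{Ehr}_{A_1^{\oplus(n+2)}}(t) + \mathrm{Ehr}_{A_1^{\oplus(n+2)}}(t)\Bigr) + (1-\alpha)\,\mathrm{Ehr}_{A_1^{\oplus(n+1)}}(t), $$
using $\mathrm{Ehr}_{A_1^{\oplus m}}(t) = (1+t)^m/(1-t)^{m+1}$ and the fact that the operator $2t\frac{d}{dt}+1$ multiplies the $k$-th Ehrhart coefficient by $(2k+1)$.

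Substituting the closed forms and clearing the common factor $(1+t)^{n+1}/(1-t)^{n+4}$, the series identity collapses to the degree-two polynomial equation
$$ 2\alpha t\bigl((2n+5)+t\bigr) + \alpha(1+t)(1-t) + (1-\alpha)(1-t)^2 = 1+8t+t^2. $$
The constant and quadratic terms match automatically for every $\alpha$, and the linear term forces $\alpha(2n+6)=5$, that is $\alpha = \tfrac{5}{2n+6}$ and $1-\alpha = \tfrac{2n+1}{2n+6}$, which lies in $(0,1)$ for every $n\ge 1$. Since $A_1^{\oplus(n+2)}=\mathrm{Cr}_{n+2}$ and $A_1^{\oplus(n+1)}=\mathrm{Cr}_{n+1}$ are $R$-interlacing by \cite[Corollary 5.4]{HKM:2017}, Lemma~\ref{lemma:CLrelation} yields that $E_{A_3\oplus A_1^{\oplus n}}$ and $E_{A_1^{\oplus(n+2)}}$ are $R$-interlacing; in particular all roots of $E_{A_3\oplus A_1^{\oplus n}}$ lie on $R$, so $A_3\oplus A_1^{\oplus n}$ is CL.

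The only genuine content, and hence the step I expect to need the most care, is pinning down the decomposition: recognizing the factor $1+t$ inside $1+9t+9t^2+t^3$ so that the correct pair of consecutive cross polytopes $\mathrm{Cr}_{n+2},\mathrm{Cr}_{n+1}$ appears, and checking that the resulting coefficient $\alpha = \tfrac{5}{2n+6}$ stays in $[0,1]$ so that Lemma~\ref{lemma:CLrelation} applies. Everything after that is the same routine series manipulation already carried out for $A_3^\vee$ in Theorem~\ref{thm:3111}.
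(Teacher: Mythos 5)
Your proposal is correct and is essentially the paper's own proof: the coefficient $\alpha=\frac{5}{2n+6}$ you solve for is exactly the paper's $\frac{5}{2(n+3)}$, giving the identical decomposition $E_{A_3\oplus A_1^{\oplus n}}(k)=\frac{5}{2(n+3)}E_{\mathrm{Cr}_{n+2}}(k)\cdot(2k+1)+\frac{2n+1}{2(n+3)}E_{\mathrm{Cr}_{n+1}}(k)$, verified by the same Ehrhart-series manipulation and concluded via Lemma~\ref{lemma:CLrelation} together with the $R$-interlacing of $E_{\mathrm{Cr}_{n+2}}$ and $E_{\mathrm{Cr}_{n+1}}$. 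The only (harmless) presentational difference is that you factor $1+9t+9t^2+t^3=(1+t)(1+8t+t^2)$ and solve for $\alpha$ from the linear coefficient, whereas the paper states the coefficients up front and verifies the series identity directly.
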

\begin{proof}
We have the following relation:
\begin{equation}
  E_{A_3\oplus A_1^{\oplus n}}(k) = \frac{5}{2(n+3)}E_{\mathrm{Cr}_{n+2}}(k)\cdot(2k+1) + \frac{2n+1}{2(n+3)}E_{\mathrm{Cr}_{n+1}}(k).
  \label{eqn:thm5-1}
\end{equation}
This follows from the relation of the Ehrhart series:
\begin{equation}
\mathrm{Ehr}_{A_3\oplus A_1^{\oplus n}} = \frac{5}{2(n+3)}\left(
2t \frac{d}{dt} \mathrm{Ehr}_{\mathrm{Cr}_{n+2}}(t) + \mathrm{Ehr}_{\mathrm{Cr}_{n+2}}(t) \right)
+ \frac{2n+1}{2(n+3)}\mathrm{Ehr}_{\mathrm{Cr}_{n+1}}(t).
\label{eqn:thm5-2}
\end{equation}
The equation \eqref{eqn:thm5-1} is derived by comparing the coefficients of $t^k$ in \eqref{eqn:thm5-2}.
The equation \eqref{eqn:thm5-2} can be verified using
$\mathrm{Ehr}_{A_3\oplus A_1^{\oplus n}}(t)=\frac{(1+9t+9t^2+t^3)(1+t)^n}{(1-t)^{n+4}}$
and $\mathrm{Ehr}_{\mathrm{Cr}_n}(t)=\frac{(1+t)^n}{(1-t)^{n+1}}$ as follows:
\begin{align}
\text{RHS of \eqref{eqn:thm5-2}} =& 
\frac{5}{2(n+3)}\bigg(
2t \frac{d}{dt} \frac{(1+t)^{n+2}}{(1-t)^{n+3}} + \frac{(1+t)^{n+2}}{(1-t)^{n+3}} \bigg)
+ \frac{2n+1}{2(n+3)} \frac{(1+t)^{n+1}}{(1-t)^{n+2}} \nonumber \\
=&
\frac{(1+t)^n}{(1-t)^{n+4}}\bigg(
2t\frac{5}{2(n+3)}\big( (n+2)(1+t)(1-t) + (1+t)^2(n+3) \big)  \nonumber \\
& \hspace{40mm} + \frac{5(1-t)(1+t)^2}{2(n+3)}
+ \frac{(2n+1)(1+t)(1-t)^2}{2(n+3)} \bigg) \nonumber \\
=& \frac{(1+t)^n(1+9t+9t^2+t^3)}{(1-t)^{n+4}} = \textrm{Ehr}_{A_3\oplus A_1^{\oplus n}}(t). \nonumber
\end{align}

The $R$-interlacing property follows from Lemma~\ref{lemma:CLrelation},
since the Ehrhart polynomials of the cross polytopes $\mathrm{Cr}_{n+1}$ and $\mathrm{Cr}_n$ are $R$-interlacing.
\end{proof}

% The above results are analogous to the results in previous sections. 
% Especially, %Theorems \ref{thm:A1111} and 
% Theorem~\ref{thm:A2111} is derived from the relation with the same coefficients as Theorem~\ref{thm:2111}. 
% On the other hand, the relation used in Theorem~\ref{thm:A3111} has different coefficients from Theorem~\ref{thm:3111}. 

\bigskip
\newcommand{\CL}{C}
\newcommand{\NC}{n}
Table~\ref{table:Amn} shows the CL-ness of $A_m \oplus A_n$, calculated by computer using Pari/GP. 
Comparing with that of $A^\vee_m \oplus A^\vee_n$, the behavior is somewhat complex. (Here, ``\CL'' means CL, and ``\NC'' means not CL.) 
Similar to the case of $A_m \oplus A_n$, it is CL for small $m$ and $n$.
On the other hand, the behavior looks different when $m$ and $n$ are large.

\begin{table}[htb]
\caption{CL-ness of $A_m \oplus A_n$ with $m,n\le 20$}
\label{table:Amn}
\begin{center}
\footnotesize
\begin{tabular}{| l | l | l | l | l | l | l | l | l | l | l | l | l | l | l | l | l | l | l | l | l | l |} \hline
$n$ $\backslash$ $m$ &
       0 &   1 &   2 &   3 &   4 &   5 &   6 &   7 &   8 &   9 &  10 &  11 &  12 &  13 &  14 &  15 &  16 &  17 &  18 &  19 &  20 \\ \hline
 0 & \CL & \CL & \CL & \CL & \CL & \CL & \CL & \CL & \CL & \CL & \CL & \CL & \CL & \CL & \CL & \CL & \CL & \CL & \CL & \CL & \CL \\ \hline 
 1 & \CL & \CL & \CL & \CL & \CL & \CL & \CL & \CL & \CL & \CL & \CL & \CL & \CL & \CL & \CL & \CL & \CL & \CL & \CL & \CL & \CL \\ \hline
 2 & \CL & \CL & \CL & \CL & \CL & \CL & \CL & \CL & \CL & \CL & \CL & \CL & \CL & \CL & \NC & \NC & \NC & \NC & \NC & \NC & \NC \\ \hline
 3 & \CL & \CL & \CL & \CL & \CL & \CL & \CL & \CL & \CL & \NC & \NC & \NC & \NC & \NC & \NC & \NC & \NC & \NC & \NC & \NC & \CL \\ \hline
 4 & \CL & \CL & \CL & \CL & \CL & \CL & \CL & \CL & \CL & \NC & \NC & \NC & \NC & \NC & \NC & \CL & \CL & \CL & \CL & \NC & \NC \\ \hline 
 5 & \CL & \CL & \CL & \CL & \CL & \CL & \CL & \CL & \CL & \NC & \NC & \NC & \NC & \NC & \CL & \CL & \CL & \NC & \NC & \NC & \NC \\ \hline
 6 & \CL & \CL & \CL & \CL & \CL & \CL & \CL & \CL & \CL & \CL & \NC & \NC & \NC & \NC & \CL & \CL & \CL & \NC & \NC & \NC & \CL \\ \hline
 7 & \CL & \CL & \CL & \CL & \CL & \CL & \CL & \CL & \CL & \CL & \CL & \NC & \NC & \NC & \CL & \CL & \CL & \NC & \NC & \CL & \NC \\ \hline
 8 & \CL & \CL & \CL & \CL & \CL & \CL & \CL & \CL & \CL & \CL & \CL & \CL & \NC & \NC & \CL & \CL & \CL & \NC & \NC & \NC & \NC \\ \hline
 9 & \CL & \CL & \CL & \NC & \NC & \NC & \CL & \CL & \CL & \CL & \CL & \CL & \NC & \NC & \NC & \CL & \CL & \CL & \NC & \NC & \NC \\ \hline
10 & \CL & \CL & \CL & \NC & \NC & \NC & \NC & \CL & \CL & \CL & \CL & \CL & \CL & \NC & \NC & \NC & \CL & \CL & \NC & \NC & \NC \\ \hline
11 & \CL & \CL & \CL & \NC & \NC & \NC & \NC & \NC & \CL & \CL & \CL & \CL & \CL & \CL & \NC & \NC & \CL & \CL & \CL & \NC & \NC \\ \hline
12 & \CL & \CL & \CL & \NC & \NC & \NC & \NC & \NC & \NC & \NC & \CL & \CL & \CL & \CL & \CL & \NC & \NC & \CL & \CL & \CL & \NC \\ \hline
13 & \CL & \CL & \CL & \NC & \NC & \NC & \NC & \NC & \NC & \NC & \NC & \CL & \CL & \CL & \CL & \CL & \NC & \NC & \CL & \CL & \NC \\ \hline
14 & \CL & \CL & \NC & \NC & \NC & \CL & \CL & \CL & \CL & \NC & \NC & \NC & \CL & \CL & \CL & \CL & \CL & \NC & \NC & \CL & \CL \\ \hline
15 & \CL & \CL & \NC & \NC & \CL & \CL & \CL & \CL & \CL & \CL & \NC & \NC & \NC & \CL & \CL & \CL & \CL & \CL & \NC & \NC & \CL \\ \hline
16 & \CL & \CL & \NC & \NC & \CL & \CL & \CL & \CL & \CL & \CL & \CL & \CL & \NC & \NC & \CL & \CL & \CL & \CL & \CL & \NC & \NC \\ \hline
17 & \CL & \CL & \NC & \NC & \CL & \NC & \NC & \NC & \NC & \CL & \CL & \CL & \CL & \NC & \NC & \CL & \CL & \CL & \CL & \CL & \NC \\ \hline
18 & \CL & \CL & \NC & \NC & \CL & \NC & \NC & \NC & \NC & \NC & \NC & \CL & \CL & \CL & \NC & \NC & \CL & \CL & \CL & \CL & \CL \\ \hline
19 & \CL & \CL & \NC & \NC & \NC & \NC & \NC & \CL & \NC & \NC & \NC & \NC & \CL & \CL & \CL & \NC & \NC & \CL & \CL & \CL & \CL \\ \hline
20 & \CL & \CL & \NC & \CL & \NC & \NC & \CL & \NC & \NC & \NC & \NC & \NC & \NC & \NC & \CL & \CL & \NC & \NC & \CL & \CL & \CL \\ \hline
\end{tabular}
\end{center}
\end{table}

\bigskip
%%%%%%%%%%%%%%%%%%%%%%%%%%%%%%%%%%%%%%%%%%%%%%%%%%%%%%%%%%%%%%%%%%%%%%%%%%%%%%%%%%%%%%%%%%%%%%%%%%%%%%%%%%%%%%%%%%%%%%%%%%%%%%%%%%%%%%%%
%%%%%%%%%%%%%%%%%%%%%%%%%%%%%%%%%%%%%%%%%%%%%%%%%%%%%%%%%%%%%%%%%%%%%%%%%%%%%%%%%%%%%%%%%%%%%%%%%%%%%%%%%%%%%%%%%%%%%%%%%%%%%%%%%%%%%%%%
%%%%%%%%%%%%%%%%%%%%%%%%%%%%%%%%%%%%%%%%%%%%%%%%%%%%%%%%%%%%%%%%%%%%%%%%%%%%%%%%%%%%%%%%%%%%%%%%%%%%%%%%%%%%%%%%%%%%%%%%%%%%%%%%%%%%%%%%
%%%%%%%%%%%%%%%%%%%%%%%%%%%%%%%%%%%%%%%%%%%%%%%%%%%%%%%%%%%%%%%%%%%%%%%%%%%%%%%%%%%%%%%%%%%%%%%%%%%%%%%%%%%%%%%%%%%%%%%%%%%%%%%%%%%%%%%%
%%%%%%%%%%%%%%%%%%%%%%%%%%%%%%%%%%%%%%%%%%%%%%%%%%%%%%%%%%%%%%%%%%%%%%%%%%%%%%%%%%%%%%%%%%%%%%%%%%%%%%%%%%%%%%%%%%%%%%%%%%%%%%%%%%%%%%%%

\section*{Appendix: Some results by computer calculation}

As mentioned in Section~\ref{sec:mn},
we investigated the CL-ness for $A^\vee_m\oplus A^\vee_n$ with $m,n\ge 2$ by computer calculations. 
The $\delta$-vectors, Ehrhart polynomials, and the CL-ness are listed in Table~\ref{table:mn-CLness}.
The computation is done by using {\tt Pari/GP}. 
The results are summarized in Theorem~\ref{thm:rank2cp}.

\bigskip

We also calculated the case of the free sums of three or four $A^\vee_d$'s with small parameters (up to $20$) by using {\tt Pari/GP}. 
For the case of the free sums of three $A^\vee_d$, our computer calculations are as follows. 

\bigskip
\noindent
$A^\vee_1\oplus A^\vee_1\oplus A^\vee_n$ : CL for $n=1, \dots, 5$, not CL for $n=6,\dots, 20$ \\
$A^\vee_1\oplus A^\vee_2\oplus A^\vee_n$ : CL for $n=2, \dots, 4$, not CL for $n=5,\dots, 20$ \\
$A^\vee_1\oplus A^\vee_3\oplus A^\vee_n$ : CL for $n=3,4$, not CL for $n=5, \dots, 20$ \\
$A^\vee_1\oplus A^\vee_4\oplus A^\vee_n$ : not CL for $n=4$, CL for $n=5$, not CL for $n=6,\dots,20$ \\
$A^\vee_2\oplus A^\vee_2\oplus A^\vee_n$ : CL for $n=2,3$, not CL for $n=4,\dots,20$ \\
$A^\vee_2\oplus A^\vee_3\oplus A^\vee_n$ : CL for $n=3,\dots,6$, not CL for $n=7,\dots,20$ \\[2mm]
Other parameters (each from 1 to 20) not listed here are not CL, up to permutation of the parameters.
%(Remark that permuting the parameters does not affect the polynomials.)

\bigskip
Similarly to the case of two parameters, roughly speaking, we can observe that CL-ness hods for small parameters and CL-ness does not hold for large parameters.
However, the boundary is sometimes complexified such that the CL/nonCL is not monotone: 
% $A^\vee_{1,4,n}$ is CL for small $n\le 3$, not CL for $n=5$, CL for $n=5$, and not CL for $n=6,\dots,20$.
% The same is observed for $A^\vee_{1,m,5}$. It is CL for $m=2$, not CL for $m=2,3$, CL for $m=4$, and not CL for $m\ge 5$.
$A^\vee_1\oplus A^\vee_4\oplus A^\vee_n$ is CL for small $n\le 3$, not CL for $n=4$, CL for $n=5$, and not CL for $n=6,\dots,20$.
The same can be observed for $A^\vee_1\oplus A^\vee_m\oplus A^\vee_5$. It is not CL for $m=2,3$, CL for $m=4$, and not CL for $m\ge 5$.

For the case of four $A^\vee_d$'s, our computer calculations are shown as follows.

\bigskip
\noindent
$A^\vee_1\oplus A^\vee_1\oplus A^\vee_1\oplus A^\vee_n$ : CL for $n=1, \dots, 4$, not CL for $n=5$, CL for $n=6$, not CL for $n=7,\dots, 20$ \\
$A^\vee_1\oplus A^\vee_1\oplus A^\vee_2\oplus A^\vee_n$ : CL for $n=2, \dots, 5$, not CL for $n=6,\dots, 20$ \\
$A^\vee_1\oplus A^\vee_1\oplus A^\vee_3\oplus A^\vee_n$ : CL for $n=3, \dots, 5$, not CL for $n=6, \dots, 20$ \\
$A^\vee_1\oplus A^\vee_1\oplus A^\vee_4\oplus A^\vee_n$ : CL for $n=4$, not CL for $n=5,\dots,20$ \\
%$A^\vee_1\oplus A^\vee_1\oplus A^\vee_5\oplus A^\vee_n$ : not CL for $n=5,\dots, 20$ \\
$A^\vee_1\oplus A^\vee_2\oplus A^\vee_2\oplus A^\vee_n$ : CL for $n=2,\dots, 6$, not CL for $n=7,\dots,20$ \\
$A^\vee_1\oplus A^\vee_2\oplus A^\vee_3\oplus A^\vee_n$ : CL for $n=3,\dots,5$, not CL for $n=6,\dots,20$ \\
$A^\vee_1\oplus A^\vee_2\oplus A^\vee_4\oplus A^\vee_n$ : CL for $n=4$, not CL for $n=5$, CL for $n=6$, not CL for $n=7,\dots,20$ \\
%$A^\vee_1\oplus A^\vee_2\oplus A^\vee_5\oplus A^\vee_n$ : not CL for $n=5,\dots,20$ \\
$A^\vee_1\oplus A^\vee_3\oplus A^\vee_3\oplus A^\vee_n$ : not CL for $n=3$, CL for $n=4$, not CL for $n=5,\dots,20$ \\
$A^\vee_1\oplus A^\vee_3\oplus A^\vee_4\oplus A^\vee_n$ : not CL for $n=4$, CL for $n=5$, not CL for $n=6,\dots,20$ \\
%$A^\vee_2\oplus A^\vee_2\oplus A^\vee_2\oplus A^\vee_n$ : not CL for $n=2,\dots,20$ \\
$A^\vee_2\oplus A^\vee_2\oplus A^\vee_3\oplus A^\vee_n$ : CL for $n=3$, not CL for $n=4,\dots, 20$ \\
$A^\vee_2\oplus A^\vee_3\oplus A^\vee_3\oplus A^\vee_n$ : not CL for $n=3$, CL for $n=4$, not CL for $n=5,\dots,20$ \\[2mm]
Other parameters (each from 1 to 20) not listed here are not CL,
up to permutation of the parameters.

% \eject
% \vspace*{-35mm}\hspace*{-25mm}
\begin{table}[p]
% \vspace*{-15mm}
\caption{$\delta$-vectors, Ehrhart polynomials, and CL-ness of $A^\vee_m\oplus A^\vee_n$ with $2\le m,n\le 7$}
\label{table:mn-CLness}
\hspace*{-20mm}
\rotatebox{0}{
% \begin{minipage}{150mm}
\begin{tabular}{| l | l | l | l |} \hline
% $n$ & $m$ & $h(A^\vee_{n,m})$,  $E_{A^\vee_{n,m}}^\text{eq}(k)$ &  \\ \hline
$m$ & $n$ & $\delta(A^\vee_m \oplus A^\vee_n)$,  $E_{A^\vee_m\oplus A^\vee_n}(k)$ &  \\ \hline
2 & 2 & 
\begin{tabular}{l}
\small
(1,8,18,8,1) \\ 
$\frac{3}{2}x^4 + 3x^3 + \frac{9}{2}x^2 + 3x + 1$ 
\end{tabular}
& CL \\ \hline
2 & 3 & 
\begin{tabular}{l}
\small
(1,15,56,56,15,1) \\ 
$\frac{6}{5}x^5 + 3x^4 + 6x^3 + 6x^2 + \frac{19}{5}x + 1$ 
\end{tabular}
& CL \\ \hline
2 & 4 & 
\begin{tabular}{l}
\small
(1,30,171,316,171,30,1) \\ 
$x^6 + 3x^5 + \frac{15}{2}x^4  + 10x^3 + \frac{19}{2}x^2 + 5x + 1$ 
\end{tabular}
& CL \\ \hline
2 & 5 & 
\begin{tabular}{l}
\small
(1,61,531,1567,1567,531,61,1) \\ 
$\frac{6}{7}x^7 + 3x^6 + 9x^5 + 15x^4 + 19x^3 + 15x^2 + \frac{43}{7}x + 1$ 
\end{tabular}
& CL \\ \hline
2 & 6 & 
\begin{tabular}{l}
\small
(1,124,1672,7300,12046,7300,1672,124,1) \\ 
$\frac{3}{4}x^8 + 3x^7 + \frac{21}{2}x^6 + 21x^5 + \frac{133}{4}x^4 + 35x^3 + \frac{43}{2}x^2 + 7x + 1$  
\end{tabular}
& not CL \\ \hline
2 & 7 & 
\begin{tabular}{l}
\small
(1,251,5282,33038,82388,82388,33038,5282,251,1) \\ 
$\frac{2}{3}x^9 + 3x^8 + 12x^7 + 28x^6 + \frac{266}{5}x^5 + 70x^4 + \frac{172}{3}x^3 + 28x^2 + \frac{39}{5}x+1$ 
\end{tabular}
& not CL\\ \hline
3 & 3 & 
\begin{tabular}{l}
\small
(1,22,143,244,143,22,1) \\ 
$\frac{4}{5}x^6 + \frac{12}{5}x^5 + 6x^4 + 8x^3 + \frac{36}{5}x^2 + \frac{18}{5}x+1$  
\end{tabular}
& CL \\ \hline
3 & 4 & 
\begin{tabular}{l}
\small
(1,37,363,1039,1039,363,37,1) \\ 
$\frac{4}{7}x^7 + 2x^6 + 6x^5 + 10x^4 + 12x^3 + 9x^2 + \frac{31}{7}x + 1$ 
\end{tabular}
& CL \\ \hline
3 & 5 & 
\begin{tabular}{l}
\small
(1,68,940,4252,6758,4252,940,68,1) \\ 
$\frac{3}{7}x^8 + \frac{12}{7}x^7 + 6x^6 + 12x^5 + 18x^4 + 18x^3 + \frac{95}{7}x^2 + \frac{44}{7}x + 1$ 
\end{tabular}
& not CL\\ \hline
3 & 6 & 
\begin{tabular}{l}
\small
(1,131,2522,16838,40988,40988,16838,2522,131,1) \\ 
$\frac{1}{3}x^9 + \frac{3}{2}x^8 + 6x^7 + 14x^6 + \frac{126}{5}x^5 + \frac{63}{2}x^4 + \frac{95}{3}x^3 + 22x^2 + \frac{39}{5}x + 1$ 
\end{tabular}
& not CL\\ \hline
3 & 7 & 
\begin{tabular}{l}
\small
(1,258,7021,65560,234898,352204,234898,65560,7021,258,1) \\ 
$\frac{4}{15}x^{10} + \frac{4}{3}x^9 + 6x^8 + 16x^7 + \frac{168}{5}x^6 + \frac{252}{5}x^5 + \frac{190}{3}x^4 + \frac{176}{3}x^3 + \frac{154}{5}x^2 + \frac{38}{5}x + 1$ 
\end{tabular}
& not CL\\ \hline
4 & 4 & 
\begin{tabular}{l}
\small
(1,52,808,3484,5710,3484,808,52,1) \\ 
$\frac{5}{14}x^8 + \frac{10}{7}x^7 + 5x^6 + 10x^5 + 15x^4 + 15x^3 + \frac{135}{14}x^2 + \frac{25}{7}x + 1$ 
\end{tabular}
& not CL \\ \hline
4 & 5 & 
\begin{tabular}{l}
\small
(1,83,1850,11942,29324,29324,11942,1850,83,1) \\ 
$\frac{5}{21}x^9 + \frac{15}{14}x^8 + \frac{30}{7}x^7 + 10x^6 + 18x^5 + \frac{45}{2}x^4 + \frac{415}{21}x^3 + \frac{80}{7}x^2 + \frac{33}{7}x + 1$ 
\end{tabular}
& not CL \\ \hline
4 & 6 & 
\begin{tabular}{l}
\small
(1,146,4377,41328,145734,221628,145734,41328,4377,146,1) \\ 
$\frac{1}{6}x^{10} + \frac{5}{6}x^9 + \frac{15}{4}x^8 + 10x^7 + 21x^6 + \frac{63}{2}x^5 +\frac{415}{12}x^4 + \frac{80}{3}x^3 + \frac{37}{2}x^2 + 9x + 1$ 
\end{tabular}
& not CL \\ \hline
4 & 7 & 
\begin{tabular}{l}
\small
(1,273,10781,143565,711474,1553106,1553106,711474,143565,10781,273,1) \\ 
$\frac{4}{33}x^{11} + \frac{2}{3}x^{10} + \frac{10}{3}x^9 + 10x^8 + 24x^7 + 42x^6 + \frac{166}{3}x^5 + \frac{160}{3}x^4 + \frac{146}{3}x^3 + 35x^2 + \frac{127}{11}x + 1$ 
\end{tabular}
& not CL \\ \hline
5 & 5 & 
\begin{tabular}{l}
\small
(1,114,3853,35032,125746,188908,125746,35032,3853,114,1) \\ 
$\frac{1}{7}x^{10} + \frac{5}{7}x^9 + \frac{45}{14}x^8 + \frac{60}{7}x^7 + 18x^6 + 27x^5 + \frac{425}{14}x^4 + \frac{170}{7}x^3 + \frac{72}{7}x^2 + \frac{10}{7}x + 1$ 
\end{tabular}
& not CL \\ \hline
5 & 6 & 
\begin{tabular}{l}
\small
(1,177,8333,106845,534882,1164162,1164162,534882,106845,8333,177,1) \\ 
$\frac{1}{11}x^{11} + \frac{1}{2}x^{10} + \frac{5}{2}x^9 + \frac{15}{2}x^8 + 18x^7 + \frac{63}{2}x^6 + \frac{85}{2}x^5 + \frac{85}{2}x^4 + 28x^3 + 11x^2 + \frac{43}{11}x + 1$ 
\end{tabular}
& not CL \\ \hline
5 & 7 &
\begin{tabular}{l}
\small
(1,304,18674,335216,2277039,6922080,9923772,6922080,2277039,335216,18674,304,1) \\
$\frac{2}{33}x^{12} + \frac{4}{11}x^{11} + 2x^{10} + \frac{20}{3}x^9 + 18x^8 + 36x^7 + \frac{170}{3}x^6 + 68x^5 + 55x^4 + \frac{82}{3}x^3 + \frac{289}{11}x^2 + \frac{216}{11}x + 1$
\end{tabular}
& not CL \\ \hline
6 & 6 & 
\begin{tabular}{l}
\small
(1,240,16782,290672,2000703,6040992,8702820,6040992,2000703,290672,16782,240,1) \\
$\frac{7}{132}x^{12} + \frac{7}{22}x^{11} + \frac{7}{4}x^{10} + \frac{35}{6}x^9 + \frac{63}{4}x^8 + \frac{63}{2}x^7 + \frac{595}{12}x^6 + \frac{119}{2}x^5 + 56x^4 + \frac{119}{3}x^3 + \frac{63}{22}x^2 - \frac{119}{11}x + 1$
\end{tabular}
& not CL \\ \hline
6 & 7 & 
\begin{tabular}{l}
\small
(1,367,35124,827372,7600805,31146987,61995744,61995744,31146987,7600805,827372,35124,367,1) \\
$\frac{14}{429}x^{13} + \frac{7}{33}x^{12} + \frac{14}{11}x^{11} + \frac{14}{3}x^{10} + 14x^9 + \frac{63}{2}x^8 + \frac{170}{3}x^7 + \frac{238}{3}x^6 + \frac{441}{5}x^5 + \frac{455}{6}x^4 + \frac{357}{11}x^3$ \\ $- \frac{28}{11}x^2 - \frac{1163}{715}x + 1$
\end{tabular}
& not CL \\ \hline
7 & 7 & 
\begin{tabular}{l}
\small
(1,494,69595,2151980,26176873,141829106,380179131,524888040,380179131,141829106,26176873, \\ \small 2151980,69595,494,1) \\
$\frac{8}{429}x^{14} + \frac{56}{429}x^{13} + \frac{28}{33}x^{12} + \frac{112}{33}x^{11} + \frac{56}{5}x^{10} + 28x^9 + \frac{170}{3}x^8 + \frac{272}{3}x^7 + \frac{1736}{15}x^6 + \frac{1736}{15}x^5+ \frac{1260}{11}x^4$ \\ $ + \frac{1120}{11}x^3 - \frac{32184}{715}x^2 - \frac{61306}{715}x + 1$
\end{tabular}
& not CL \\ \hline
\end{tabular}
% \end{minipage}
}
\end{table}

\end{document}